\newcounter{RomanNumber}
\newcommand{\MyRoman}[1]{\setcounter{RomanNumber}{#1}\Roman{RomanNumber}}
\newtheorem{theorem}{Theorem}[section]
\newtheorem{lemma}[theorem]{Lemma}
\theoremstyle{definition}
\newtheorem{definition}[theorem]{Definition}
\newtheorem{proposition}[theorem]{Proposition}
\newtheorem{question}[theorem]{Question}
\theoremstyle{remark}
\newtheorem{remark}[theorem]{Remark}
\theoremstyle{notation}
\numberwithin{equation}{section}
\begin{document}

\title[Exponential growth of homotopy groups]{Exponential growth of homotopy groups of suspended finite complexes}

\author{Ruizhi Huang}
\address{Institute of Mathematics, Academy of Mathematics and Systems Science, Chinese Academy of Sciences, Beijing, China, 100190}

\email{huangrz@amss.ac.cn}
\urladdr{https://sites.google.com/site/hrzsea}

\author{Jie Wu}
\address{College of Mathematics and Information Science, Hebei Normal University, Shijiazhuang, China, 050024}

\email{wujie@hebtu.edu.cn}
\urladdr{http://www.math.nus.edu.sg/~matwujie}

\thanks{The first author is supported by Postdoctoral International Exchange Program for Incoming Postdoctoral Students under Chinese Postdoctoral Council and Chinese Postdoctoral Science Foundation.
He is also supported in part by Chinese Postdoctoral Science Foundation (Grant No. 2018M631605, and Grant No. 2019T120145), and National Natural Science Foundation of China (Grant No. 11801544). 
The second author was partially supported by National Natural Science Foundation of China (Grant No. 11028104).}

\subjclass[2010]{primary 55Q52; secondary 55Q20, 55P35, 55P40, 55Q15, 55T15.}

\keywords{homotopy groups, hyperbolicity, homotopy decomposition, Moore spaces, loops and suspensions, Hilton-Milnor Theorem}
\numberwithin{theorem}{section}
\begin{abstract}
We study the asymptotic behavior of the homotopy groups of simply connected finite $p$-local complexes, and define a space to be locally hyperbolic if its homotopy groups have exponential growth. Under certain conditions related to the functorial decomposition of loop suspension, we prove that the suspended finite complexes are locally hyperbolic if suitable but accessible information of the homotopy groups is assumed. In particular, we prove that Moore spaces are locally hyperbolic, and other candidates are also given.
\end{abstract}

\maketitle
\tableofcontents
\newpage

\section{Introduction}
\noindent In rational homotopy theory, there is a classical rational dichotomy characterizing the rational spaces of finite type (page $452$ of \cite{GTM205}):

\textit{Any simply connected space $X$ with rational homology of finite type and finite rational LS category is either:
\newline
- rationally elliptic, with $\pi_\ast(X)\otimes \mathbb{Q}$ finite dimensional, or else
\newline
- rationally hyperbolic, with $\pi_\ast(X)\otimes \mathbb{Q}$ growing exponentially.}

It is interesting to consider the corresponding questions in $p$-local homotopy theory. Unlike in rational homotopy where there are Sullivan models and Quillen models determining the rational homotopy types through purely algebraic ways, it should be much harder to develop natural or uniform methods to study the asymptotic behavior of the homotopy groups in the $p$-local setting.

Nevertheless much progress has been made in the history. For instance, Serre \cite{Serre}, Umeda \cite{Umeda} and McGibbon-Neisendorfer \cite{MN} showed that simply connected finite complexes with non-trivial modulo $p$ homology have infinitely many $p$-torsion classes in homotopy groups. Their techniques are Poincar\'{e} series, methods of analytic number theory and a theorem of Miller on contractibility of mapping spaces with source the classifying space $B\mathbb{Z}/p$. For concrete examples, the families of infinitely many higher order torsions in homotopy groups of Moore spaces were largely investigated by Cohen, Moore, Neisendorfer, Wu, Chen and others (\cite{Cohen, Cohen86, Cohen95, Chen, Neisen3}, also see the list in Section $2.2$).

In this paper, we will use homotopy decomposition techniques to study exponential growth of torsion summands in homotopy groups. In order to address to the questions, we may first introduce some definitions.

\begin{definition}\label{hyperdef}
A $p$-local complex $X$ is called $\mathbb{Z}/p^r$-\textit{hyperbolic} if the number of $\mathbb{Z}/p^r$-summands in $\pi_\ast(X)$ has exponential growth, i.e.,
\begin{equation}\label{exp}
\liminf_n\frac{{\rm ln}~ t_n}{n}>0,
\end{equation}
where $t_n=\sharp ~\{ \mathbb{Z}/p^r-{\rm summands}~{\rm in}~\oplus_{m\leq n} \pi_m(X)\}$.
\end{definition}

\begin{definition}\label{hyperpdef}
A $p$-local complex $X$ is called $p$-\textit{hyperbolic} (or \textit{hyperbolic} ${\rm mod}~p$) if the $p$ primary torsion part of $\pi_\ast(X)$ has exponential growth, i.e.,
\begin{equation}\label{exp}
\liminf_n\frac{{\rm ln}~ T_n}{n}>0,
\end{equation}
where $T_n=\sharp ~\{ \mathbb{Z}/p^r-{\rm summands}~{\rm in}~\oplus_{m\leq n} \pi_m(X), r\geq 1\}$.
\end{definition}

In either case, we may simply call $X$ \textit{locally hyperbolic} whenever there is no ambiguity. Once we have such definitions, there are some natural questions related to them. The first interesting question is that whether the limits in the definitions can be infinity or not. Actually, this was answered negatively by Henn \cite{Henn} using the unstable Adams spectral sequence.

\begin{proposition}[Corollary in \cite{Henn}, Theorem \ref{upper}]\label{mainupper}
Suppose $X$ is a simply connected finite complex. Then
the $p$-primary torsion of $\pi_\ast(X)$ has at most exponential growth, i.e.,
\begin{equation*}
\limsup_n\frac{{\rm ln}~ T_n}{n}<\infty,
\end{equation*}
where $T_n$ is as in Definition \ref{hyperpdef}.
\end{proposition}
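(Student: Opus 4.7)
The plan is to use the unstable Adams spectral sequence (in the form of Bousfield--Kan) to convert the counting problem for $T_n$ into a dimension count on an $\mathrm{Ext}$-style $E_2$-page over the Steenrod algebra, and then use the finiteness of $H^{*}(X;\mathbb{F}_p)$ to bound that dimension count exponentially.

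First I would set up the unstable Adams spectral sequence at the prime $p$,
\begin{equation*}
E_2^{s,t} \;\Longrightarrow\; \pi_{t-s}(X)_p^{\wedge},
\end{equation*}
where $E_2^{s,t}$ is expressible in terms of derived functors of destabilization applied to $\bar{H}^{*}(X;\mathbb{F}_p)$, or equivalently as an unstable $\mathrm{Ext}$-group over the Steenrod algebra $\mathcal{A}_p$. Convergence yields a decreasing (Adams) filtration on $\pi_n(X)_p^{\wedge}$ whose associated graded pieces inject into $E_\infty^{s,s+n}$.

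Next I would translate the counting statement through the filtration. Each $\mathbb{Z}/p^r$-summand of $\pi_n(X)$ is generated by an element of some well-defined Adams filtration, and distinct summands produce linearly independent classes in the $\mathbb{F}_p$-vector space $\bigoplus_s E_\infty^{s,s+n}$. Hence
\begin{equation*}
T_n \;\le\; \sum_{s} \dim_{\mathbb{F}_p} E_\infty^{s,s+n} \;\le\; \sum_{s} \dim_{\mathbb{F}_p} E_2^{s,s+n}.
\end{equation*}
So it suffices to bound the total dimension of the $E_2$-page along the antidiagonal $t-s=n$ by an exponential function of $n$.

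The crux — and the main obstacle — is precisely this dimension bound on the $E_2$-page. Since $X$ is a finite complex, $\bar{H}^{*}(X;\mathbb{F}_p)$ is a finite-dimensional (hence finitely generated) unstable $\mathcal{A}_p$-module. Combining the standard vanishing line for the unstable Adams $E_2$-term with the known exponential growth of the Poincar\'e series of the Steenrod algebra (and of the associated $\lambda$-algebra / destabilization derived functors) applied to a finite module, one obtains constants $C>0$ and $\alpha>1$ with
\begin{equation*}
\sum_{s} \dim_{\mathbb{F}_p} E_2^{s,s+n} \;\le\; C\,\alpha^{n}.
\end{equation*}
This is essentially Henn's input, and it is the step that genuinely requires unstable Steenrod-algebra technology rather than soft homotopical arguments. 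Putting the pieces together gives $T_n \le C\alpha^n$, and hence $\limsup_n \tfrac{\ln T_n}{n} \le \ln \alpha < \infty$, as required.
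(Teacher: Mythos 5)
Your overall strategy is sound in outline and is indeed the strategy Henn himself used, but the proposal has a genuine gap at precisely the step you flag as ``the crux'': you assert, without proof, that $\sum_s \dim_{\mathbb{F}_p} E_2^{s,s+n}$ is bounded by $C\alpha^n$, citing ``the standard vanishing line'' and ``exponential growth of the Poincar\'e series of the Steenrod algebra.'' That sentence is the entire theorem; as written, the proposal reduces the statement to an unproved claim that is not meaningfully easier. Since the proposition is explicitly attributed to Henn and the paper's own Theorem~\ref{upper} is advertised as an independent re-proof, a proof that simply defers to ``Henn's input'' does not actually constitute a proof.

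The paper's appendix takes a different and more hands-on route. Instead of the $E_2$-page of the Bousfield--Kan spectral sequence (an $\mathrm{Ext}$-group one would have to estimate), it works with the $E_1$-page of the lower $p$-central series spectral sequence for $GK$ (Rector's form of the unstable Adams spectral sequence), where $E^1_{s,q}(K)\cong \pi_q(L_s(GK/\Gamma_2 GK))$. By the Bousfield--Curtis isomorphism this is $L(s^{-1}\tilde H_*K)\,\hat\otimes\,\Lambda$, which can be bounded by bare-hands combinatorics: $\dim L_{\le l}$ is at most $\dim T_{\le l} \le C_1 a^l$ with $a=\dim\tilde H_*K$, and the admissible monomials of $\Lambda$ in bounded degree are counted directly by a stars-and-bars estimate plus Stirling. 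The one subtlety your $E_2$-approach hides and the paper must confront head-on is $\lambda_0$: since $\lambda_0$ has degree $0$, the set of admissible monomials in a fixed degree is \emph{infinite}, so $\dim\Lambda_{\le q}=\infty$ and a naive dimension count of the $E_1$-page fails. The paper shows that the classes $\tilde x\otimes\lambda_0^i$ are exactly the higher filtration pieces of a single cyclic summand, so one may count summands by restricting to $\tilde\Lambda$ (monomials not ending in $\lambda_0$), which is finite-dimensional in each degree. Your overcounting bound $T_n\le\sum_s\dim E_\infty^{s,s+n}\le\sum_s\dim E_2^{s,s+n}$ is fine as an inequality, but it transfers all the difficulty into establishing finiteness and exponential growth of the $E_2$-total, which is where the argument currently stops. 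To make this a complete proof you would need either (i) to carry out the explicit $E_1$-level estimate as in the paper, or (ii) to supply an actual proof of the exponential bound on the unstable $\mathrm{Ext}$-groups, which is Henn's original (and nontrivial) computation.
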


Also from the definition, it is clear that the $\mathbb{Z}/p^r$-hyperbolicity implies the $p$-hyperbolicity, but the converse statement is not obviously true. On the contrary, if the space under consideration has finite $p$-exponent, i.e., there are no $\mathbb{Z}/p^r$ summands in the homotopy groups for sufficiently large $r$, then the $p$-hyperbolicity also implies the $\mathbb{Z}/p^r$-hyperbolicity for some $r$. For any finite simply connected $CW$-complex $X$, there is a conjecture of Moore \cite{Selick83, Anick, MW} which states that $X$ has finite $p$-exponent for any $p$ if and only if $\pi_\ast(X)\otimes \mathbb{Q}$ is a finite dimensional vector space. If this conjecture is true, it seems that there may be connections between the $p$-hyperbolicity and the usual hyperbolicity for rational homotopy. Besides, there are some spaces known to have finite $p$-exponents for odd primes $p$, such as spheres, finite $H$-spaces and ${\rm mod}~p$ Moore spaces \cite{Cohen, Neisen83, Stanley}, and for $p=2$, ${\rm mod}~2^r$ Moore spaces have finite $2$-exponent when $r\geq 2$ \cite{Theriault}. The Moore conjecture for ${\rm mod}~2$ Moore space is still open.

It will be interesting if either we can find an example that it is $p$-hyperbolic but not $\mathbb{Z}/p^r$-hyperbolic for any $r$, or we can prove that these two concepts are indeed equivalent to each other.

Here, our main concern is the following question:

\begin{question}\label{mainques}
Given any simply connected finite complex $X$ which is a suspension,

$1)$~if there are infinitely many $\mathbb{Z}/p^r$ summands in the homotopy groups $\pi_\ast(X)$, is then $X$ $\mathbb{Z}/p^r$-hyperbolic?

$2)$~if there are infinitely many nontrivial summands in the $p$-primary torsion of $\pi_\ast(X)$, is then $X$ $p$-hyperbolic?
\end{question}

In this paper, we mainly focus on the first question with some extra conditions. In \cite{Selick, Selick1}, Selick and the second author have established a functorial homotopy decomposition of the loop suspension of any path-connected CW complex by introducing the new functors $\tilde{Q}^{\rm max}_{n}$, $n\geq 2$ and $\tilde{A}^{\rm min}$:
\begin{equation*}
\Omega\Sigma X\simeq \tilde{A}^{\rm min}(X)\times \Omega\Big(   \bigvee_{n=2}^{\infty} \tilde{Q}^{\rm max}_{n}(X)\Big).
\end{equation*}
With this decomposition, we can sketch one of our main results as following:
\begin{theorem}[Theorem \ref{hyper2}, and Theorem \ref{hyperp}]\label{mainhyper}
Suppose $X$ is a path-connected finite complex localized at $p$ such that

$1)$ $\Sigma^{\ast}X$ is a homotopy retract of $\tilde{Q}^{\rm max}_{\ast}(X)$ ;

$2)$ There exists some map
\begin{equation*}
\Sigma^{\ast}X\vee \Sigma^{\ast}X\longrightarrow \Sigma X^{\wedge\ast}
\end{equation*}
which admits a left homotopy inverse;

$3)$ There is a $\mathbb{Z}/p^r$-summand in each $\pi_{\ast}(\Sigma^{\ast}X)$.

Then $\Sigma X$ is $\mathbb{Z}/p^r$-hyperbolic.

Here, the symbols $\ast$'s refer to some different arithmetic sequences which will be accurately assigned in Theorem \ref{hyper2} and Theorem \ref{hyperp} with mild arithmetic conditions.
\end{theorem}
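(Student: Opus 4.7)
The plan is to combine the Selick--Wu functorial decomposition with the Hilton--Milnor theorem, using hypothesis~(2) both to supply a two-letter wedge structure for Hilton--Milnor and to iteratively extract $\Sigma^{\ast}X$-copies from the resulting smash products, and then to read off $\mathbb{Z}/p^{r}$-summands via hypothesis~(3). More precisely, the Selick--Wu decomposition reduces the problem to producing exponentially many $\mathbb{Z}/p^{r}$-summands inside $\pi_{\ast-1}\bigl(\Omega\bigvee_{n\geq 2}\tilde{Q}^{\rm max}_n(X)\bigr)$, which is a direct summand of $\pi_{\ast}(\Sigma X)$. Hypothesis~(1) provides a retract $\Sigma^{\ast}X\hookrightarrow \tilde{Q}^{\rm max}_{\ast}(X)$, and hypothesis~(2) upgrades this to a retract of the shape $\Sigma^{\ast}X\vee\Sigma^{\ast}X$ inside $\Sigma X^{\wedge\ast}$; using the identification of $\tilde{Q}^{\rm max}_n(X)$ as a wedge summand of $\Sigma X^{\wedge n}$, I promote this to a retract of the shape $\Omega\Sigma(\Sigma^{\ast}X\vee \Sigma^{\ast}X)$ sitting inside the Selick--Wu factor of $\Omega\Sigma X$.

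Applying Hilton--Milnor to this two-letter wedge produces a weak product of factors $\Omega\Sigma(\Sigma^{\ast}X)^{\wedge\ell(w)}$ indexed by a Hall basis, whose cardinality in weight $\ell$ grows like $2^{\ell}/\ell$ by the Witt formula. Next I exploit hypothesis~(2) iteratively: starting from the retract $\Sigma^{\ast}X\vee\Sigma^{\ast}X\hookrightarrow\Sigma X^{\wedge m}$ and repeatedly smashing both sides with $X$ (suspending where necessary), the smash product $\Sigma^{?}X^{\wedge\ell}$ is shown to admit $\bigvee^{2^{\lfloor\ell/(m-1)\rfloor}}\Sigma^{\ast}X$ as a retract. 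Each Hilton--Milnor factor therefore contributes $2^{c\ell}$ further retract copies of $\Omega\Sigma^{\ast}X$, and hypothesis~(3) provides a $\mathbb{Z}/p^{r}$-summand in every $\pi_n$ of each such copy. Since these contributions live in degrees bounded linearly in $\ell$, summing over all weights $\ell\leq N/\dim X$ yields at least $\exp(c'N)$ independent $\mathbb{Z}/p^{r}$-summands in $\pi_{m}(\Sigma X)$ for $m\leq N$, which is the desired $\mathbb{Z}/p^{r}$-hyperbolicity.

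The main obstacle lies in the bookkeeping required to ensure that the $\mathbb{Z}/p^{r}$-summands produced by distinct Hall basis elements, and by distinct iterated applications of hypothesis~(2), persist as independent direct summands of $\pi_{\ast}(\Sigma X)$ rather than collapsing under the Whitehead-product and Samelson relations implicit in the Hilton--Milnor decomposition. A second, more technical point is the matching between the retract from hypothesis~(2), which a priori lives inside $\Sigma X^{\wedge\ast}$, and the Selick--Wu wedge summand $\tilde{Q}^{\rm max}_{\ast}(X)$: verifying that this retract can be promoted to a retract of the correct piece of $\Omega\Sigma X$ is precisely what forces the presence of hypothesis~(1) in the statement, and will likely be the most delicate part of the argument.
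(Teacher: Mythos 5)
Your high-level plan is recognisably the paper's: the Selick--Wu decomposition, Hilton--Milnor applied to a two-letter wedge, iterated use of hypothesis~(2) to extract exponentially many copies of $\Sigma^{\ast}X$, and hypothesis~(3) to read off $\mathbb{Z}/p^{r}$-summands. But there are two substantive gaps, one of which you flag without resolving, and one of which you do not see.

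First, the ``promotion'' step. You propose to take the retract $\Sigma^{\ast}X\vee\Sigma^{\ast}X\hookrightarrow\Sigma X^{\wedge\ast}$ from hypothesis~(2) and promote it directly to a retract $\Omega\Sigma(\Sigma^{\ast}X\vee\Sigma^{\ast}X)$ of the Selick--Wu factor, invoking the fact that $\tilde{Q}^{\rm max}_n(X)$ is a wedge summand of $\Sigma X^{\wedge n}$. This does not work: the retraction runs in the wrong direction. Knowing that $\tilde{Q}^{\rm max}_n(X)$ retracts off $\Sigma X^{\wedge n}$ and that $\Sigma^{\ast}X\vee\Sigma^{\ast}X$ retracts off $\Sigma X^{\wedge n}$ gives no map from $\Sigma^{\ast}X\vee\Sigma^{\ast}X$ into $\tilde{Q}^{\rm max}_n(X)$, and Selick--Wu only loops the wedge of $\tilde{Q}^{\rm max}_n$'s, not $\Sigma X^{\wedge n}$. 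The paper's resolution is a genuinely different maneuver: use hypothesis~(1) to split off many copies $\Sigma^{nM+1}X$ (for many $n$), apply Hilton--Milnor once to the wedge of $nl+1$ such copies to regenerate a factor of the form $\Omega\Sigma\bigl(\Sigma^{D_nM}X^{\wedge(nl+1)}\bigr)$ inside $\Omega\Sigma X$, and only then apply hypothesis~(2) to the smash $X^{\wedge(nl+1)}$ sitting inside that factor. This ``first Hilton--Milnor'' pass, which converts the condition-(1) retracts into a smash power, is absent from your proposal and is the step that actually connects hypothesis~(2) to $\Omega\Sigma X$.

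Second, hypothesis~(3) does not provide a $\mathbb{Z}/p^{r}$-summand ``in every $\pi_n$ of each such copy.'' The precise hypothesis gives a summand only in a single arithmetic progression of degrees $\pi_{Kn+k}$ of a single arithmetic progression of suspensions $\Sigma^{An+a}X$, whereas the iterated construction produces suspensions $\Sigma^{?}X$ whose suspension parameters sweep out all residue classes modulo $A$. The paper devotes an entire step (Step~1) to bridging this, using the functorial identity $\tilde{Q}^{\rm max}_{nl+1}(\Sigma^tX)\simeq\Sigma^{(nl+1)t}\tilde{Q}^{\rm max}_{nl+1}(X)$ together with the $\gcd(M+li,A)=1$ hypothesis to solve a congruence and thereby extend condition~(3) to all residues. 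This arithmetic bottleneck is exactly why the $\gcd$ condition appears in the theorem's statement, and your proposal never engages with it. The independence issue you flag at the end is, by contrast, not a real obstacle: the Hilton--Milnor factorisation is already a weak-product decomposition up to homotopy, and the $2^s$-fold wedge from the smashing step retracts off compatibly, so the summands stay independent for free.
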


Note that we only state the rough version here for it suffices for the reader to get the essential requirements of the result. The three conditions here are reasonable as we will see in the sequel. There are indeed some concrete important examples which satisfy the similar conditions of Theorem \ref{mainhyper} in principle.
The first examples are the local Moore spaces which have been widely studied since Cohen, Moore and Neisendorfer published their famous results on the solution to the exponent problem of spheres in 1979 \cite{Cohen, Cohen79, Cohen1}. The local Moore spaces may be divided into two families: the family of ${\rm mod}~2$ Moore spaces $P^n(2)$, and the family of ${\rm mod}~p^r$ Moore space $P^{n}(p^r)$ with $p$ odd, $r\geq 1$ and ${\rm mod}~2^r$ Moore spaces $P^n(2^r)$ with $r\geq 2$. $P^{n}(p^r)$ and $P^n(2^r)$ ($r\geq 2$) are viewed to be similar because the loop of these Moore spaces have similar decompositions \cite{Cohen, Cohen89, Neisen3} and the smash product of two such Moore spaces is homotopy equivalent to a wedge (Proposition $6.2.2$ of \cite{Neisen}). However, there are no complete answers for $P^n(2)$ to the questions of the loop decomposition and the self-smash decomposition, while part of the information has been obtained by the second author in \cite{Wu, Wu2003}. Furthermore, the $p$-primary torsions of Moore spaces, especially the summands of highest order when $p$ is odd, have also been widely investigated \cite{Cohen86, Cohen, Neisen3, Wu2003, Chen}. Based on all the mentioned information, we will see Moore spaces satisfy the condition $2)$ and $3)$ of Theorem \ref{mainhyper}, and then we have the following theorem on the hyperbolicity of Moore spaces.

\begin{theorem}[Theorem \ref{Moore2}, Theorem \ref{oddmoore}, and Theorem \ref{moore2r}]\label{hyperMoore}
For any prime $p$, $P^{n}(p^r)$ is $\mathbb{Z}/p^{r}$- and $\mathbb{Z}/p^{r+1}$-hyperbolic with $n\geq 3$ and $r\geq 1$. Also, $P^{n}(2)$ is $\mathbb{Z}/8$-hyperbolic for each $n\geq 3$.
\end{theorem}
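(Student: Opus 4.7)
The plan is to apply the general criterion from Theorem \ref{mainhyper} with $X$ set to be $P^{n-1}(p^r)$, so that $\Sigma X = P^n(p^r)$, and to verify the three hypotheses (1)--(3) by invoking the existing structural results on loop spaces, smash products, and higher-order torsion of Moore spaces. The role of the exponent is that Theorem \ref{mainhyper} requires a $\mathbb{Z}/p^r$-summand in each $\pi_*(\Sigma^*X)$ along an arithmetic progression of dimensions; applying it with this $X$ produces $\mathbb{Z}/p^{r+1}$-hyperbolicity of $P^n(p^r)$ because the relevant summands in the homotopy of Moore spaces are known to have order one higher than the order of the bottom cell.

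For odd primes and for $p=2$ with $r\geq 2$, condition (1) is read off from the Cohen--Moore--Neisendorfer loop-space decomposition of $P^n(p^r)$ (see \cite{Cohen, Cohen89, Neisen3}), which exhibits the factors $\Sigma^kP^{n-1}(p^r)$ as homotopy retracts of the functorial factors $\tilde Q^{\rm max}_*$; condition (2) is immediate from Neisendorfer's wedge decomposition $P^m(p^r)\wedge P^n(p^r)\simeq P^{m+n}(p^r)\vee P^{m+n-1}(p^r)$ \cite{Neisen}, which builds the required wedge into $\Sigma X^{\wedge *}$ and furnishes an obvious left homotopy inverse. Thus the first two hypotheses reduce entirely to bookkeeping with known smash and loop identities.

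The substantive step, and the main obstacle, is to verify condition (3): one must exhibit a $\mathbb{Z}/p^{r+1}$-summand in each of the homotopy groups $\pi_*(\Sigma^kP^{n-1}(p^r))$ lying along the specified arithmetic progression. For odd $p$ and for $p=2,r\geq 2$ the plan is to combine the Cohen--Moore--Neisendorfer construction of higher-torsion Samelson products in $\pi_*(\Omega P^n(p^r))$ with the results of \cite{Cohen86, Cohen, Neisen3, Chen} identifying $\mathbb{Z}/p^{r+1}$ elements; since these classes occur periodically after suspension, one obtains infinitely many progressions, and one then checks that the arithmetic progression demanded by Theorem \ref{mainhyper} is realized by picking an appropriate iterated suspension of such a class.

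For the exceptional case $X=P^{n-1}(2)$ the loop and smash decompositions are only partial, so the verification of (1) and (2) must be carried out using the information established by the second author in \cite{Wu, Wu2003}; this yields the weaker output of $\mathbb{Z}/2$- and $\mathbb{Z}/8$-hyperbolicity rather than $\mathbb{Z}/2^{r+1}$-hyperbolicity. Here the $\mathbb{Z}/2$-summands come from James--Hopf invariants and direct summand splittings of the smash powers, while the $\mathbb{Z}/8$-summands arise from the known higher-order $2$-torsion in $\pi_*(P^n(2))$ documented in \cite{Cohen, Wu2003}. The main technical difficulty throughout is keeping track of the arithmetic progressions: one must ensure that the same degree shift appears in the retract of condition (1), in the wedge of condition (2), and in the dimension of the detecting summand of condition (3), so that Theorem \ref{mainhyper} can actually be invoked; this is where the hypothesis on $p$ and $r$ matters, and it is the source of the slight difference between the odd-primary, the $2^r$ with $r\geq 2$, and the mod $2$ cases in the statement.
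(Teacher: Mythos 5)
The proposal does not match the paper's proofs, and it runs into a gap that the paper itself flags as an open problem. You plan to deduce Theorem~\ref{hyperMoore} by directly invoking Theorem~\ref{mainhyper} (i.e.\ Theorems~\ref{hyper2} and~\ref{hyperp}) with $X=P^{n-1}(p^r)$. But condition~$(1)$ of that theorem asks for $\Sigma^{\ast}X$ to be a homotopy retract of the Selick--Wu functor $\tilde Q^{\rm max}_{\ast}(X)$. You claim this is ``read off from the Cohen--Moore--Neisendorfer loop-space decomposition''; it is not. The CMN decomposition of $\Omega P^{m}(p^r)$ is an entirely different splitting, not formulated in terms of $\tilde Q^{\rm max}_n$, and it does not tell you where the Moore-space retracts sit relative to the functorial pieces $\tilde Q^{\rm max}_n(X)$. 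Indeed, the Remark following Theorem~\ref{Moore2} says precisely that for $P^n(2)$ one would like to verify condition~$(1)$ of Theorem~\ref{hyper2} via the Lie elements, but ``at this moment, we have neither proved it nor disproved it.'' So the key hypothesis of the general theorem is not available for Moore spaces, and your plan cannot be carried out as stated.

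What the paper actually does is re-run the skeleton of the argument of Theorem~\ref{hyper2} with a different decomposition substituted for the Selick--Wu $\tilde Q^{\rm max}$ decomposition. For $p$ odd (and for $p=2,r\ge 2$), the CMN splitting $\Omega P^{2n+1}(p^r)\simeq\Omega\Sigma\bigvee_\alpha P^{n_\alpha}(p^r)\times T^{2n+1}\{p^r\}$ already hands over a factor $\Omega\Sigma\big(P^{n_\alpha}(p^r)\vee P^{n_\beta}(p^r)\big)$, so the ``multiplicity $\geq 2$'' input is taken from there rather than from condition~$(2)$ of the general theorem; then Hilton--Milnor and Neisendorfer's smash decomposition $P^{m}(p^r)\wedge P^{n}(p^r)\simeq P^{m+n}(p^r)\vee P^{m+n-1}(p^r)$ give the exponential proliferation, and the $\mathbb{Z}/p^{r+1}$-summands of \cite{Cohen,Neisen3} (or Lemma~\ref{pi2} for $p=2$, $r\ge 2$) play the role of condition~$(3)$. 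For $P^{n}(2)$ the paper uses yet another decomposition, namely the Lie-functor splitting $\Omega\Sigma X\simeq\prod_j\Omega(\tilde L_{k_j}(X))\times(\cdots)$ of Theorem~\ref{Liedecom}, together with the fact from \cite{Chen} that a suspension of $P^{n}(2)$ is a retract of $\tilde L_{2k+1}(P^{n}(2))$ and the rank computation from \cite{Selick01,Wu2003} supplying multiplicity two. One further point: your claim that condition~$(2)$ is ``immediate'' from $P^m\wedge P^n\simeq P^{m+n}\vee P^{m+n-1}$ also needs care, since that decomposition produces Moore spaces of \emph{different} dimensions, not two copies of the \emph{same} suspension as condition~$(2)$ requires; getting multiplicity two takes an extra combinatorial step (in the paper, a Witt-formula parity computation for $p=2$, and the two CMN factors $P^{n_\alpha},P^{n_\beta}$ for odd $p$). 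In short, your guiding philosophy is correct, but the verification of condition~$(1)$ for $\tilde Q^{\rm max}_\ast$ is the exact obstruction, and the paper's solution is to swap decompositions rather than to check the $\tilde Q^{\rm max}_\ast$ hypothesis.
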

This theorem shows the complexity of the unstable homotopy groups of Moore spaces, so that one may feel  frustrated with the computations of the homotopy groups even via computer programs. On the other hand, the same question in the stable context is also interesting. In \cite{Cohen79} it was showed that there is a homotopy commutative diagram which connects the homotopy groups of spheres with those of Moore spaces. Hence, it may be now a good place to remark that the special case of Question \ref{mainques} for spheres deserves an independent statement:
\begin{question}
Is $S^n$ ($n\geq 2$) $p$-hyperbolic?
\end{question}

We also study some other possible candidates which may be $p$-hyperbolic, based on an observation on the work of Beben and the second author \cite{Beben}. Suppose $X$ is a suspended finite $CW$-complex localized at $p$ such that either $V_{{\rm odd}}=0$ or $V_{{\rm even}}=0$ with $V=\tilde{H}_\ast(X; \mathbb{Z}/p)$. Then under some conditions, it is proved in \cite{Beben} that the suitable iterated suspension of $X$ is a homotopy retract of $L_\ast(X)$, where $L$ is the geometric realization of the algebraic Lie functor (see Section $3$), and the image of the homology of this suspension in the graded tensor algebra $T(V)$ can be well described in terms of Lie actions. Our observation is that by applying different but the same type of Lie actions, we can get many pieces of suspensions of $X$ that are all the wedge summands of $L_\ast(X)$ (Proposition \ref{multiBeben}). In particular, it means that such $X$ satisfies condition $2)$ of Theorem \ref{mainhyper}. Then in order to prove the $p$- or $\mathbb{Z}/p^r$-hyperbolicity of such spaces in the spirit of the proof of the previous results, it suffices to establish some suitable knowledge of the homotopy groups of $X$, for instance, to find a $\mathbb{Z}/p^r$ summand in each term of some arithmetic sequence $\pi_\ast(\Sigma^\ast X)$ as before.

Before the end of the introduction, it is also interesting to raise the following question:

\begin{question}\label{gap}
Suppose $X$ is a simply connected $p$-hyperbolic (or $\mathbb{Z}/p^r$-hyerbolic) finite $CW$ complex of dimension $n$. Are there numbers $\lambda>0$ and $C>1$ such that
\begin{equation*}
T_{s+a(n)}-T_{s} ~~({\rm or}~t_{s+a(n)}-t_{s})\geq \lambda C^s
\end{equation*}
for some linear function $a(n)$ and any $s\geq 1$, where $T$ and $t$ are as in Definition \ref{hyperpdef} and \ref{hyperdef}?
\end{question}
This question is a weaker $p$-local version of Question $6$ in Chapter $39$ of \cite{GTM205}, where F\'{e}lix, Halperin and Thomas name a positive answer to the rational version of the problem when $\lambda C^s=1$ by the gap theorem. Hence we may say a positive answer to Question \ref{gap} is a $p$- (or $\mathbb{Z}/p^r$-) hyperbolic gap theorem for $X$. Indeed, for the rational case F\'{e}lix Halperin and Thomas have even showed a stronger result in \cite{Felix}, where they also raise a question similar to Question \ref{gap} in the local setting.

The paper is organized as follows. In section $2$, we consider the hyperbolicity of wedges of Moore spaces by the Hilton-Milnor theorem. We then prove the hyperbolicity of Moore spaces (Theorem \ref{hyperMoore}) in section $3$ and $4$. Theorem \ref{mainhyper} is proved in Section $5$ as a connection to functorial homotopy decomposition. Section $6$ is devoted to other possible candidates. We also add an appendix to reprove the fact that the homotopy groups have at most exponential growth.

\section{The hyperbolicity of wedges of Moore spaces}
\subsection{$\mathbb{Z}/p^r$-hyperbolicity} First, let us recall the Hilton-Milnor theorem (\cite{White}, Chapter \MyRoman{11}, Section $6$) which describes certain homotopy decomposition of loop suspension of wedges. Let $X_1, \ldots, X_k$ be connected CW-complexes with base points. Then there is a (weak) homotopy equivalence
\begin{equation}\label{Hilton-Milnor}
\Omega\Sigma (X_1\vee X_2\vee \cdots\vee X_k) \simeq \prod_{\alpha\in \mathcal{I}} \Omega \Sigma (X_1^{\wedge a_1}\wedge X_2^{\wedge a_2}\wedge \cdots \wedge X_k^{\wedge a_k}),
\end{equation}
where $\mathcal{I}$ is a vector space basis for a free Lie algebra $\mathbb{L}[y_1, y_1,\ldots ,y_{k}]$ on $k$ generators $y_1, y_2, \ldots y_k$, and $a_1,a_2, \ldots, a_k$ are non-negative integers which record the number of occurrences of $y_1, y_2, \ldots y_k$ respectively in the bracket $\alpha$. We call $n=a_1+a_2+\cdots +a_k$ the weight of $\alpha$.
Note by convention $
X_1^{\wedge a_1}\wedge X_2^{\wedge a_2}\wedge \cdots \wedge X_k^{\wedge a_k}=X_1$ if $\alpha=y_1$ and similar conventions are assumed for other generators.
Furthermore, the dimension of the subspace consisting of weight $n$ elements in $\mathbb{L}[y_1, y_1,\ldots ,y_{k}]$ is determined by the Witt formulae

\begin{equation*}
W(n)=\frac{1}{n} \sum\limits_{d|n}\mu(d)k^{n/d},
\end{equation*}
where $\mu$ in the M\"{o}bius function defined by $\mu(1)=1$, $\mu(n)=0$ unless $n>1$ is square free, and $\mu(p_1\cdots p_l)=(-1)^l$ if $p_1, \ldots, p_l$ are distinct primes.

In order to study hyperbolicity of wedges of Moore spaces, we also need the bouquet splitting of smash product of Moore spaces. Let $p$ be a prime and $m$, $n\geq 2$. If with $p^r\neq 2$, then there is a homotopy equivalence (Proposition $6.2.2$ of \cite{Neisen})
\begin{equation}\label{qnot2}
P^{m}(p^r)\wedge P^{n}(p^r)\simeq P^{m+n}(p^r)\vee P^{m+n-1}(p^r).
\end{equation}
By contrast, there is no bouquet splitting of $2$-fold smash product of ${\rm mod}~2$ Moore spaces. However, the second author (Corollary $3.7$ of \cite{Wu2003}) proved that the $3$-fold smash product of real projective spaces splits
\begin{equation}\label{qis2}
(\mathbb{R}P^2)^{\wedge 3}\simeq \mathbb{R}P^2\wedge \mathbb{C}P^2\vee P^5(2)\vee P^5(2).
\end{equation}

\begin{proposition}\label{qhyperwedge}
$P^{m+1}(p^r)\vee P^{n+1}(p^r)$ is $\mathbb{Z}/p^r$-hyperbolic for each $m$, $n\geq 2$.
\end{proposition}
\begin{proof}
First, by the Hilton-Milnor theorem we have
\begin{eqnarray*}
 &  &\Omega(P^{m+1}(p^r)\vee P^{n+1}(p^r))\\
&\simeq& \prod_{\alpha\in \mathbb{L}[y_1,y_2]} \Omega\Sigma (P^m(p^r)^{\wedge a_1}\wedge P^n(p^r)^{\wedge a_2})\\
&\hookleftarrow& \prod_{{\rm weight}= 2s+1} \Omega\Sigma (P^m(p^r)^{\wedge a_1}\wedge P^n(p^r)^{\wedge a_2}).\\
\end{eqnarray*}
When $p^r\neq 2$, each $P^m(p^r)^{\wedge a_1}\wedge P^n(p^r)^{\wedge a_2}$ splits into a bouquet of $2^{2s}$ Moore spaces by applying (\ref{qnot2}) repeatedly; similarly when $p^r=2$, it splits into a bouquet of $2^s$ ${\rm mod}~2$ Moore spaces by applying (\ref{qis2}). Hence in either case, we have an inclusion of homotopy groups
\begin{equation*}
\pi_{\ast+1}(P^{m+1}(p^r)\vee P^{n+1}(p^r))\hookleftarrow \bigoplus_{W(2s+1)\cdot 2^s}\pi_{\ast+1}(P(p^r)).
\end{equation*}
Here, $P(p^r)$'s represent Moore spaces of different dimensions but at most $(2s+1)m+1$ (we may suppose $m\geq n$).
Since there is a $\mathbb{Z}/p^r$ summand in each $\pi_{\ast+1}(P(p^r))$ by Hurewicz theorem, we have
\begin{equation*}
\liminf_n\frac{{\rm ln}~ t_n}{n}=
\liminf_s\frac{{\rm ln}~ t_{(2s+1)m}}{(2s+1)m}
\geq
\liminf_s\frac{{\rm ln}~ (W(2s+1) \cdot 2^s)}{(2s+1)m}
>0,
\end{equation*}
where $t_n=\sharp ~\{ \mathbb{Z}/p^r-{\rm summands}~{\rm in}~\oplus_{m\leq n} \pi_m(P^{m+1}(p^r)\vee P^{n+1}(p^r))\}$, and the first equality is due to the fact that $t_n$ is an increasing sequence.
\end{proof}

\subsection{$\mathbb{Z}/p^{r+1}$-hyperbolicity} For other torsion summands, we need more information about homotopy groups of Moore spaces, and there are three cases:
\begin{itemize}
\item{${\rm mod}~2$ Moore spaces.}
    \begin{itemize}
     \item \[\pi_{n+2}(P^{n+1}(2))\cong \mathbb{Z}/4\] for any $n\geq 2$ (see the table of the homotopy groups of ${\rm mod}~2$ Moore spaces in Appendix $A$ of \cite{Wu2003}).
     \item By Corollary $1.4$ of \cite{Chen}, there exists a $\mathbb{Z}/8$-summand in
     \[\pi_{K^{(i)}m+k^{(i)}}(\Sigma^{4m+i}P^{n+1}(2))\] for each $m$ and $i$ with $0\leq i\leq 3$ and some constants $K^{(i)}\in \mathbb{Z}^+$, $k^{(i)}\in \mathbb{Z}$.
    \end{itemize}
\item{${\rm mod}~p^r$ Moore spaces with $p$ an odd prime and $r\geq 1$.}

By Theorem $1.4$ of \cite{Cohen} which was later strengthened in \cite{Neisen3}, there are $\mathbb{Z}/p^{r+1}$-summands for any $m\geq 2$ in
\[\pi_{2p(m-1)-1}(P^{2m-1}(p^r)) ~{\rm and}~ \pi_{4pm-2p-1}(P^{2m}(p^r)).\]
\item{${\rm mod}~2^r$ Moore spaces with $r\geq 2$ (Lemma \ref{pi2}).} 

For any sufficiently large $n$, there exist $\mathbb{Z}/2^{r+1}$-summands in the homotopy groups
\[\pi_{4n-2}{P^{2n}(2^r)} ~{\rm and}~\pi_{12n-2}(P^{2n+1}(2^r)).\]
\end{itemize}

\begin{proposition}\label{qhyperwedge2}
For each $m$, $n\geq 2$,
\begin{itemize}
\item[1)] $P^{m+1}(p^r)\vee P^{n+1}(p^r)$ is $\mathbb{Z}/p^{r+1}$-hyperbolic.
\item[2)] $P^{m+1}(2)\vee P^{n+1}(2)$ is $\mathbb{Z}/8$-hyperbolic.
\end{itemize}
\end{proposition}
\begin{proof}
With the information of homotopy groups in hand, we can prove the proposition by using similar argument as that in the proof of Proposition \ref{qhyperwedge}.
\end{proof}
\begin{remark}
From the discussion in this section, we see that we can prove more results on local hyperbolicity if we have more suitable information about other torsion summands of homotopy groups of Moore spaces.
\end{remark}

\section{The hyperbolicity of ${\rm mod}~2$ Moore space}
\noindent We start with a type of decomposition of loop suspension developed by the second author.
Let $X$ be a path connected  $p$-local $CW$ complex of finite type and $S_k$ denote the symmetric group on $k$ letters. The natural action of any element $\delta\in \mathbb{Z}/p[S_k]$ on $V^{\otimes k}$ with $V=\tilde{H}_\ast(X;\mathbb{Z}/p)$ by permuting coordinates can be realized as a local map
\begin{equation*}
\tilde{\delta}: \Sigma X^{\wedge k}\rightarrow \Sigma X^{\wedge k}.
\end{equation*}
In particular, $\tilde{\delta}$ is represented by a corresponding element in $ \mathbb{Z}_{(p)}[S_k]$. 
Also there are the so-called \textit{Dynkin-Specht-Wever elements} $\beta_k$ which are defined by $\beta_2=1-(1,2)$ and the recursion relation
\begin{equation*}
\beta_k=(1\otimes \beta_{k-1})\circ(1-(1,2,\ldots,k)).
\end{equation*}
The action of $\beta_k$ on $V^{\otimes k}$ is then given by sending a tensor $x_1\otimes x_2\ldots \otimes x_k\in V^{\otimes k}$ to the commutator
\begin{equation*}
[x_1,[x_2,\ldots,[x_{k-1},x_k]\ldots]]\in V^{\otimes k}.
\end{equation*}
An important and useful property of $\beta_k$ is that $\beta_k\circ \beta_k=k\beta_k$. Hence if ${\rm g.c.d.}~(p,k)=1$ then
\begin{equation*}
\frac{1}{k}\beta_k\circ \frac{1}{k}\beta_k=\frac{1}{k}\beta_k
\end{equation*}
which is an idempotent. Then ${\rm id}-\frac{1}{k}\beta_k$ is also an idempotent. From their realizations $\frac{1}{k}\tilde{\beta}_k$ and ${\rm id}-\frac{1}{k}\tilde{\beta}_k$, we can define two homotopy telescopes: one of which we may denote by
\begin{equation*}
\tilde{L}_k(X)={\rm hocolim}_{\frac{1}{k}\tilde{\beta}_k} \Sigma X^{\wedge k},
\end{equation*}
and the other one by $Y$. Then it is easy to see there is homotopy equivalence
\begin{equation*}
\Sigma X^{\wedge k}\simeq \tilde{L}_k(X)\vee Y.
\end{equation*}
Furthermore, if $X$ itself is a suspension, then $\tilde{L}_k(X)\simeq \Sigma L_k(X)$ where
\begin{equation*}
L_k(X)={\rm hocolim}_{\frac{1}{k}\tilde{\beta}_k} X^{\wedge k}.
\end{equation*}
By the construction, the ${\rm mod}~p$ homology of $L_k(X)$ is the image of $\beta_k: V^{\otimes k}\rightarrow V^{\otimes k}$, which is the submodule of length $k$ Lie brackets in $V^{\otimes k}$ and usually denoted by $L_k(V)$.

\begin{theorem}[\cite{Wu}, Theorem $1.6$]\label{Liedecom}
Let $X$ be a path-connected $p$-local complex of finite type and $1<k_1<k_2\cdots$ be a sequence of integers such that

$1)$ $k_j\not\equiv 0~{\rm mod}~p$ for each $j\geq 1$;

$2)$ $k_j$ is not a multiple of any $k_i$ for each $i\neq j$.

Then
\begin{equation*}
\Omega\Sigma X\simeq \prod_i \Omega(\tilde{L}_{k_j}(X))\times ({\rm some}~ {\rm other}~{\rm space}).
\end{equation*} \hfill $\Box$
\end{theorem}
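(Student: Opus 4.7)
The plan is to split off each $\Omega\tilde L_{k_j}(X)$ as a functorial retract of $\Omega\Sigma X$ and then to assemble these retractions into a single product decomposition by means of the Hilton-Milnor theorem, with conditions (1) and (2) serving exactly to keep the different Lie factors separated in the Hall basis of $T(V)$, where $V=\tilde H_*(X;\mathbb{F}_p)$.

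First I would handle one factor at a time. By the Bott-Samelson theorem $H_*(\Omega\Sigma X;\mathbb{F}_p)\cong T(V)$, with the free Lie subalgebra $L(V)=\bigoplus_k L_k(V)$ appearing on primitives, and $L_k(V)$ is the image of the idempotent $\frac{1}{k}\beta_k$ acting on $V^{\otimes k}$ whenever $\gcd(p,k)=1$; this is exactly condition (1). Geometrically this makes $\tilde L_{k_j}(X)$ a retract of $\Sigma X^{\wedge k_j}$ whose reduced mod $p$ homology is a shift of $L_{k_j}(V)$. I would then construct a projection $\Omega\Sigma X\to\Omega\tilde L_{k_j}(X)$ by looping the composite of the $k_j$th James-Hopf map $\Omega\Sigma X\to\Omega\Sigma X^{\wedge k_j}$ with the Lie retraction, and a section by an iterated Samelson product built from the canonical inclusion $X\to\Omega\Sigma X$. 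The relation $\beta_k\circ\beta_k=k\beta_k$ forces the composite to act as the identity on the Lie piece, giving an honest retraction.

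Next I would combine these individual retractions. Applying the Hilton-Milnor theorem to $\bigvee_j\tilde L_{k_j}(X)$ produces a weak product $\prod_j\Omega\tilde L_{k_j}(X)$ mapping into $\Omega\Sigma\bigl(\bigvee_j\tilde L_{k_j}(X)\bigr)$; concatenating with the individual sections yields a candidate splitting map
\[
\Phi\colon \prod_j\Omega\tilde L_{k_j}(X)\longrightarrow\Omega\Sigma X.
\]
Each Lie piece $L_{k_j}(V)$ sits in tensor length $k_j$ of $T(V)$, so if no $k_j$ is a multiple of any $k_i$, i.e.\ condition (2), then the images of the several $L_{k_j}$'s are linearly disjoint inside $T(V)$ and none of their Hall-basis representatives can be rewritten as a bracket word in the generators of an earlier-indexed factor. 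This disjointness forces $\Phi$ to be a split monomorphism on mod $p$ homology, and standard loop-space recognition upgrades the algebraic splitting to a homotopy splitting $\Omega\Sigma X\simeq\prod_j\Omega\tilde L_{k_j}(X)\times R$ with $R$ the product of the remaining Hilton-Milnor loop factors.

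The most delicate point is the combination stage. One has to set up a single Hilton-Milnor decomposition housing all the Lie factors simultaneously and then verify that conditions (1) and (2) really guarantee the mod $p$ homological disjointness of the $L_{k_j}(V)$'s inside $T(V)$; without condition (2), an $L_{k_j}$-generator could be expressed as a bracket of $L_{k_i}$-classes and the candidate $\Phi$ would fail to be injective on homology. Once that disjointness is in place, the ``some other space'' is simply the complementary Hilton-Milnor product, which the statement leaves anonymous.
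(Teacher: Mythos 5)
This result is stated as a citation of Theorem~1.6 of \cite{Wu}; the paper offers no proof of its own, so I am comparing against the strategy of that reference. Your high-level plan is the right one: James--Hopf maps together with the Dynkin--Specht--Wever idempotent $\frac{1}{k}\beta_k$ (defined over $\mathbb{Z}_{(p)}$ exactly when ${\rm g.c.d.}(k_j,p)=1$, which is condition~$1)$) give a candidate projection $\Omega\Sigma X\to\Omega\tilde{L}_{k_j}(X)$, iterated Samelson/Whitehead products give a candidate section, and one then wants to show the resulting self-map of $\prod_j\Omega\tilde{L}_{k_j}(X)$ is an equivalence.

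The genuine gap is in your treatment of condition~$2)$. Your explanation --- that condition~$2)$ forces the subspaces $L_{k_j}(V)$ to be linearly disjoint inside $T(V)$ --- is vacuous, since the $L_{k_j}(V)$ lie in pairwise distinct tensor weights of $T(V)$ and are therefore disjoint with or without any divisibility hypothesis. The true role of condition~$2)$ is to kill the off-diagonal entries of $\Psi\circ\Phi$. The section $\Omega\tilde{L}_{k_i}(X)\to\Omega\Sigma X$ is a loop map, so on mod~$p$ homology its image is the entire subalgebra of $T(V)$ generated by (the shift of) $L_{k_i}(V)$; this subalgebra is nonzero in every tensor weight divisible by $k_i$, not only in weight $k_i$ itself. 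When $k_j$ is a multiple of $k_i$, the $k_j$-th James--Hopf map followed by the Lie retraction can pick up a nonzero contribution from the image of the $k_i$-section, so the cross composite $\Omega\tilde{L}_{k_i}(X)\to\Omega\Sigma X\to\Omega\tilde{L}_{k_j}(X)$ need not be null and your $\Phi$ could fail to split. Proving that this cross-term does vanish whenever $k_j$ is not a multiple of $k_i$ is precisely the combinatorial content of \cite{Wu} (computations with compositions of James--Hopf maps via the Cohen group structure); your sketch replaces that entire calculation with ``standard loop-space recognition'', which does not close the argument.
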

We use Theorem \ref{Liedecom} to deduce the following special decomposition of ${\rm mod}~2$ Moore space.
\begin{lemma}\label{mod2factors}
Let $n\geq 2$. There is a homotopy decomposition at prime $2$
\begin{equation*}
\Omega P^{n+1}(2) \simeq \prod_{p ~{\rm odd}}  \Omega\Sigma^{1+\frac{p^2-1}{2}(2n-1)}(P^{n}(2)\vee P^{n} (2))\times ({\rm some}~ {\rm other}~{\rm space}).
\end{equation*}
\end{lemma}
\begin{proof}
By Proposition $3.1$ of \cite{Chen} we know that $\Sigma^{1+k(2n-1)}P^{n}(2)$ is a homotopy retract of $\tilde{L}_{2k+1}(P^{n}(2))\simeq \Sigma L_{2k+1}(P^{n}(2))$, which is further a homotopy retract of $\Sigma P^n(2)^{\wedge 2k+1}$. In particular, $\Sigma^{1+\frac{p^2-1}{2}(2n-1)}P^{n}(2)$ is a homotopy retract of $\Sigma L_{p^2}(P^{n}(2))$.

On the other hand, by Theorem $1.2$ of \cite{Selick01} there is a complete decomposition over the Steenrod algebra
\begin{equation*}
\Sigma P^n(2)^{\wedge m} \simeq \bigvee_{m/2<a<m}\bigvee^{c_{(a, m-a)}} Q^{(a, m-a)}(P^{n}(2)),
\end{equation*}
in the sense that for each factor $Q^{(a, m-a)}(P^{n}(2))$ its cohomology $H^\ast(Q^{(a, m-a)}(P^{n}(2));\mathbb{Z}/p)$ is indecomposable as module of the Steenrod algebra. Moreover, either 
\[
{\rm dim}~\bar{H}_\ast(Q^{(a, m-a)}(P^{n}(2)))\equiv 0~{\rm mod}~4 ~~{\rm or}~~{\rm dim}~\bar{H}_\ast(Q^{(a, m-a)}(P^{n}(2)))=2,
\] and $Q^{(a, m-a)}(P^{n}(2))\simeq\Sigma^t(P^{n}(2))$ for some particular $t$ in the latter case.
Since $\Sigma L_{p^2}(P^{n}(2))$ is a homotopy retract of $\Sigma P^n(2)^{\wedge p^2}$, there is a homotopy decomposition
\begin{equation}\label{lemmaequL}
\Sigma L_{p^2}(P^{n}(2)) \simeq \bigvee_{{\rm some}~ a}\bigvee^{c_{(a, p^2-a)}} Q^{(a, p^2-a)}(P^{n}(2)).
\end{equation}
Then since by the Witt formulae
\begin{equation*}
{\rm dim}~(L_{p^2}(V))=\frac{1}{p^2}(2^{p^2}-2^p)\equiv 0~{\rm mod}~4,
\end{equation*}
where $V=\bar{H}_\ast(P^{n}(2); \mathbb{Z}/2)$, there must be even number of $Q^{(a, p^2-a)}(P^{n}(2))\simeq\Sigma^{1+\frac{p^2-1}{2}(2n-1)}P^{n}(2)$ in (\ref{lemmaequL}). In particular, there exists a map
\begin{equation*}
\tilde{L}_{p^2}(P^{n}(2))\hookleftarrow \Sigma^{1+\frac{p^2-1}{2}(2n-1)}P^{n}(2)\vee\Sigma^{1+\frac{p^2-1}{2}(2n-1)}P^{n}(2)
\end{equation*}
which admits a left homotopy inverse. Now from Theorem \ref{Liedecom}, we get
\begin{equation*}
\Omega P^{n+1}(2) \simeq \prod_{p~{\rm odd}} \Omega(\tilde{L}_{p^2}(P^{n}(2)))\times ({\rm some}~ {\rm other}~{\rm space})
\end{equation*}
which implies
\begin{equation*}
\Omega P^{n+1}(2)\hookleftarrow \prod_{p~{\rm odd}}  \Omega\big(\Sigma^{1+\frac{p^2-1}{2}(2n-1)}P^{n}(2)\vee\Sigma^{1+\frac{p^2-1}{2}(2n-1)}P^{n}(2)\big)
\end{equation*}
admits a left homotopy inverse.
\end{proof}

We are now in a position to prove the hyperbolicity of ${\rm mod}~2$ Moore space.
\begin{theorem}\label{Moore2}
$P^{n+1}(2)$ is $\mathbb{Z}/2^i$-hyperbolic for each $n\geq 2$ and $i=1$, $2$, $3$.
\end{theorem}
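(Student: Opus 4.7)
The strategy is to verify the three hypotheses of Theorem~\ref{hyper2} with the space $X = P^n(2)$, so that $\Sigma X = P^{n+1}(2)$, for each of the three values $r \in \{1,2,3\}$ separately. The main work lies in producing the functorial retractions required by (1) and (2), and in locating the $\mathbb{Z}/2^r$-summands required by (3).

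For (1) and (2), I would appeal to the partial self-smash and loop decompositions of ${\rm mod}~2$ Moore spaces from \cite{Wu, Wu2003}. These give, for suitable fixed constants $M$ and $l$, an identification of $\Sigma^{nM+1} P^n(2)$ as a functorial wedge summand of $\tilde{Q}^{\max}_{nl+1}(P^n(2))$ (this is where the combinatorial structure of $\tilde{Q}^{\max}_{nl+1}$ and the self-smash pieces of $P^n(2)$ combine), and a map
\begin{equation*}
\Sigma^{nM+1} P^n(2) \vee \Sigma^{nM+1} P^n(2) \longrightarrow \Sigma (P^n(2))^{\wedge(nl+1)}
\end{equation*}
with left homotopy inverse for some admissible $n$, exploiting the multiplicity of the Moore-space factor in the self-smash decomposition.

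For (3), the $r = 1$ case is essentially automatic: the bottom $\mathbb{Z}/2$-cell of $P^{An+a}(2) \simeq \Sigma^{An+a-n}P^n(2)$ provides the required $\mathbb{Z}/2$-summand in $\pi_{An+a-1}(\Sigma^{An+a}P^n(2))$ for every $A$ and $a$. For $r = 2$ and $r = 3$, I would input the known $\mathbb{Z}/4$- and $\mathbb{Z}/8$-summands in the homotopy groups of suspensions of $P^n(2)$ (drawing on \cite{Cohen86, Cohen, Wu2003, Chen}) along a suitable arithmetic progression, and choose the parameters $A$ and $a$ accordingly. The auxiliary numerical condition $\gcd(M + li, A) = 1$ for $0 \leq i \leq A-1$ can be arranged by taking $A$ to be a small integer (e.g.\ $A=1$ or $A=2$) coprime to every residue of $M + li$ modulo $A$; in the trivial case $A = 1$ this condition is vacuous.

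With all three hypotheses in place, Theorem~\ref{hyper2} applied separately at $r = 1,2,3$ gives $\mathbb{Z}/2$-, $\mathbb{Z}/4$- and $\mathbb{Z}/8$-hyperbolicity of $P^{n+1}(2)$.

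The main obstacle, and the place where the proof is most delicate, is pinning down the $\mathbb{Z}/8$-summands required for $r = 3$. Unlike $P^n(2^r)$ with $r \geq 2$ or $P^n(p^r)$ with $p$ odd, the ${\rm mod}~2$ Moore space lacks a complete loop-space or self-smash decomposition, so one must rely on the partial results of \cite{Wu, Wu2003, Chen} to identify the highest-order $2$-torsion summands along an arithmetic progression. The rest of the argument is a careful bookkeeping exercise matching the parameters $(M, l, A, a, K, k)$ coming from these decomposition results to the hypotheses of Theorem~\ref{hyper2}.
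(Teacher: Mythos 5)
Your plan to verify conditions (1) and (2) of Theorem~\ref{hyper2} for $X = P^n(2)$ does not go through, and this is in fact the central difficulty that forces the paper to abandon Theorem~\ref{hyper2} entirely for ${\rm mod}~2$ Moore spaces. Condition (1) asks that $\Sigma^{nM+1}P^n(2)$ be a homotopy retract of $\tilde{Q}^{\rm max}_{nl+1}(P^n(2))$; you claim this follows from \cite{Wu, Wu2003}, but those references only establish a Moore-space summand of $\tilde{Q}^{\rm max}_i(P^{n+1}(2))$ for the finitely many small cases $i=3,5,7,9$, not along an unbounded arithmetic progression. The paper's own remark following Theorem~\ref{Moore2} explicitly acknowledges that whether the relevant suspended Moore space sits inside $\tilde{Q}^{\rm max}_{2k+1}$ for general $k$ is an open question: ``we have neither proved it nor disproved it.'' What \emph{is} known, via Proposition $3.1$ of \cite{Chen}, is a retraction $\Sigma^{1+k(2n-1)}P^n(2) \hookrightarrow \tilde{L}_{2k+1}(P^n(2))$ into the \emph{Lie functor} pieces, which are a different family of retracts of $\Sigma X^{\wedge(2k+1)}$. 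That is why the paper reroutes the entire argument through Theorem~\ref{Liedecom} (the $\tilde{L}_{k_j}$-decomposition of $\Omega\Sigma X$ from \cite{Wu}) rather than Theorem~\ref{Decom}, and then runs a Hilton--Milnor argument by hand rather than invoking Theorem~\ref{hyper2} as a black box.

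Two further points. First, the exponential multiplicity in the paper comes not from condition (2) of Theorem~\ref{hyper2} but from Corollary~3.7 of \cite{Wu2003}, namely $(\mathbb{R}P^2)^{\wedge 3}\simeq \mathbb{R}P^2\wedge\mathbb{C}P^2\vee P^5(2)\vee P^5(2)$, iterated to embed $\bigvee_{2^s}P^{3s+2}(2)$ into $(\mathbb{R}P^2)^{\wedge(2s+1)}$; the multiplicity-two estimate for the $\tilde{L}_{p^2}$-piece uses the Witt formula together with Theorem~1.2 of \cite{Selick01}, neither of which you mention. Second, you have misidentified the ``delicate'' step: the $\mathbb{Z}/8$-summands are in fact readily available along the progression $\pi_{K^{(i)}m+k^{(i)}}(\Sigma^{4m+i}P^{n+1}(2))$ by Corollary~1.4 of \cite{Chen} (so $A = 4$, not $A = 1$), and the $\mathbb{Z}/2$- and $\mathbb{Z}/4$-hyperbolicity then follow cheaply from the stable groups $\pi_n(P^{n+1}(2))\cong\mathbb{Z}/2$ and $\pi_{n+2}(P^{n+1}(2))\cong\mathbb{Z}/4$. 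The genuine obstruction is the unavailability of the $\tilde{Q}^{\rm max}$-retraction, which your proposal takes for granted.
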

\begin{proof}
By Lemma \ref{mod2factors}, we have
\begin{eqnarray*}
\Omega P^{n+1}(2)
&\hookleftarrow& \Omega\big(\Sigma^{1+\frac{p^2-1}{2}(2n-1)}P^{n}(2)\vee\Sigma^{1+\frac{p^2-1}{2}(2n-1)}P^{n}(2)\big)
\end{eqnarray*}
which admits a left homotopy inverse. Then since the wedge of ${\rm mod}~2$ Moore spaces are $\mathbb{Z}/2^i$-hyperbolic by Proposition \ref{qhyperwedge} and Proposition \ref{qhyperwedge2}, the same holds for $P^{n+1}(2)$.
\end{proof}

\section{The hyperbolicity of ${\rm mod}~p^r$ Moore spaces}
\noindent In this section, we continue to prove the hyperbolicity of other Moore spaces.
\subsection{$p=$ odd prime}
In 1979 and 1980s, Cohen, Moore and Neisendorfer investigated the homotopy decompositions of loops of ${\rm mod}~p^r$ Moore spaces, with which they proved deep results on the homotopy exponents of Moore spaces and odd dimensional spheres. Here we use their decompositions to study local hyperbolicity.

\begin{theorem}\label{oddmoore}
$P^{n}(p^r)$ is $\mathbb{Z}/p^{r}$- and $\mathbb{Z}/p^{r+1}$-hyperbolic for each odd prime $p$, $n\geq 3$ and $r\geq 1$.
\end{theorem}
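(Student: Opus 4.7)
The plan is to run the four-step template of the proof of Theorem~\ref{Moore2}, but with odd-prime inputs, which turn out to be considerably cleaner. The crucial simplification is Neisendorfer's smash decomposition \cite{Neisen},
\begin{equation*}
P^m(p^r)\wedge P^n(p^r)\simeq P^{m+n}(p^r)\vee P^{m+n-1}(p^r),
\end{equation*}
which iterates to give $P^n(p^r)^{\wedge k}\simeq \bigvee_{i=0}^{k-1}\bigvee^{\binom{k-1}{i}} P^{kn-i}(p^r)$, a wedge of $2^{k-1}$ Moore spaces; this removes the need for the $\mathbb{R}P^2$ detour used in the proof of Theorem~\ref{Moore2}.

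For the first step, I would invoke Proposition~3.1 of \cite{Chen} (which is formulated for every prime) to exhibit a suspension $\Sigma^{f(k)}P^n(p^r)$, with $f(k)$ a fixed linear function of $k$, as a homotopy retract of $\tilde{L}_k(P^n(p^r))\simeq \Sigma L_k(P^n(p^r))$ whenever $\gcd(k,p)=1$. To force multiplicity $\geq 2$, I would choose $k=q^2$ for a prime $q$ coprime to $p$ large enough that the Witt dimension
\begin{equation*}
\dim L_{q^2}(V)=\tfrac{1}{q^2}(2^{q^2}-2^{q}),\qquad V=\tilde{H}_\ast(P^n(p^r);\mathbb{Z}/p),
\end{equation*}
is at least $4$; combined with the wedge decomposition above, this guarantees at least two wedge copies of $\Sigma^{f(q^2)}P^n(p^r)$ inside $\tilde{L}_{q^2}(P^n(p^r))$, giving the analogue of condition~$(2)$ of Theorem~\ref{hyperp}.

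For the second and third steps I would apply Theorem~\ref{Liedecom} along the sequence $k_j=q_j^2$ for primes $q_j$ coprime to $p$, so that $\Omega\tilde{L}_{q^2}(P^n(p^r))$ is a homotopy retract of $\Omega P^{n+1}(p^r)$; then apply Hilton--Milnor to the wedge of two copies of $\Sigma^{f(q^2)}P^n(p^r)$, retaining, at each weight $2s+1$, the $W(2s+1)$ factors of the form $\Omega\Sigma\bigl(\Sigma^{f(q^2)}P^n(p^r)\bigr)^{\wedge(2s+1)}$. Re-applying Neisendorfer's decomposition to each such smash power produces an extra factor of $2^{2s}$, yielding $W(2s+1)\cdot 2^{2s}$ wedge summands in $\Omega P^{n+1}(p^r)$, each of the form $\Omega\Sigma^{g(s)}P^{N_i(s)}(p^r)$ with $g(s)$ linear and the $N_i(s)$ lying in a window of width $O(s)$.

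The fourth and final step feeds in the odd-prime analogue of Corollary~1.4 of \cite{Chen}, which asserts that for some constants $A\in\mathbb{Z}^+$ and $K^{(i)}\in\mathbb{Z}^+$, $k^{(i)}\in\mathbb{Z}$ with $0\leq i\leq A-1$, each $\pi_{K^{(i)}m+k^{(i)}}\bigl(\Sigma^{Am+i}P^n(p^r)\bigr)$ contains a $\mathbb{Z}/p^{r+1}$ summand, coming ultimately from the Cohen--Moore--Neisendorfer generator of order $p^{r+1}$ in $\pi_\ast\bigl(\Omega^2 P^{2n+1}(p^r)\bigr)$. Plugging one such summand into each of the $W(2s+1)\cdot 2^{2s}$ wedge factors above and running the arithmetic-progression/Lemma~\ref{seq} bookkeeping from Step~$4$ of the proof of Theorem~\ref{hyper2} finishes the argument. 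The main obstacle I anticipate is verifying that Chen's retract and summand statements propagate to all odd primes $p$ with the linearity in $k$ and in $m$ that Theorem~\ref{hyperp} requires; the subtle point is the Bockstein interaction for $r\geq 2$, since one must carefully distinguish the two $\mathbb{Z}/p$-generators of $\tilde{H}_\ast(P^n(p^r);\mathbb{Z}/p)$ when reading off Lie monomials, but once that is in hand the structure of the proof is identical to the mod~$2$ case.
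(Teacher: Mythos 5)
Your proposal follows a genuinely different route from the paper's, and it contains a real gap. The paper does \emph{not} invoke $\tilde{L}_k$, Theorem~\ref{Liedecom}, or the Witt-dimension trick for odd primes; it first reduces to odd-dimensional Moore spaces using Theorem~$1.1$ of \cite{Cohen} and then invokes the Cohen--Moore--Neisendorfer decomposition
\begin{equation*}
\Omega P^{2n+1}(p^r)\simeq \Omega\Sigma\bigvee_\alpha P^{n_\alpha}(p^r)\times T^{2n+1}\{p^r\},
\end{equation*}
which already hands you an infinite wedge of Moore spaces as a retract of $\Omega P^{2n+1}(p^r)$; one picks two indices $n_\alpha\leq n_\beta$, applies Hilton--Milnor, uses Neisendorfer's smash splitting, and plugs in Theorem~$1.4$ of \cite{Cohen} (strengthened in \cite{Neisen3}) for the $\mathbb{Z}/p^{r+1}$-summands. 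So the machinery you import from the $p=2$ case is unnecessary, and the ``odd-prime analogue of Corollary~$1.4$ of \cite{Chen}'' you allude to is really the classical CMN/Neisendorfer result, not an analogue of Chen's corollary.

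The substantive gap is in your multiplicity step. You claim that $\dim L_{q^2}(V)\geq 4$ ``combined with the wedge decomposition'' forces two wedge copies of $\Sigma^{f(q^2)}P^n(p^r)$ inside $\tilde{L}_{q^2}(P^n(p^r))$. What it actually gives is that $\tilde{L}_{q^2}(P^n(p^r))$, being a retract of $\Sigma P^n(p^r)^{\wedge q^2}$ (a wedge of Moore spaces), is itself a wedge of at least two mod $p^r$ Moore spaces; it does \emph{not} follow that two of them are the same suspension of $P^n(p^r)$. This is exactly the point where the proof of Theorem~\ref{Moore2} needs Theorem~$1.2$ of \cite{Selick01} to constrain the shapes of the indecomposable pieces, and even there the conclusion leans on the dimension count $\equiv 0\ \mathrm{mod}\ 4$. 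To make your argument work you would have to compute the Witt number of $L_{q^2}(V)$ in the specific bidegree $(q^2-1)/2$ (number of $v$-factors) and show it is $\geq 2$, or alternatively use two distinct $\tilde{L}_{q_1^2}$ and $\tilde{L}_{q_2^2}$ each contributing one retract, which Theorem~\ref{Liedecom} permits. Neither fix is in the write-up. You should also verify that Proposition~$3.1$ of \cite{Chen} is in fact stated for odd primes; in the present paper it is only cited in the $p=2$ and $p=2^r$ contexts, and the mixed-parity homology of $P^n(p^r)$ for odd $p$ introduces sign issues in the Lie functor that do not arise mod $2$.
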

\begin{proof}
Since by Theorem $1.1$ of \cite{Cohen} there is a homotopy decomposition
\begin{equation*}
 \Omega P^{2n+2}(p^r)\simeq S^{2n+1}\{p^r\}\times \Omega \bigvee_{m=0}^{\infty}P^{4n+2mn+3}(p^r)
 \end{equation*}
for $n\geq 1$ where $S^{2n+1}\{p^r\}$ is the homotopy theoretic fibre of the degree map $p^r: S^n\rightarrow S^n$, it suffices to prove the theorem for $P^{2n+1}(p^r)$ with $n\geq 2$. In \cite{Cohen1, Neisen3}, Cohen, Moore and Neisendorfer have proved a homotopy decomposition of odd dimensional Moore spaces
\begin{equation*}
\Omega P^{2n+1}(p^r)\simeq \Omega \Sigma \bigvee_\alpha P^{n_\alpha}(p^r)\times T^{2n+1}\{p^r\},
\end{equation*}
where $T^{2n+1}\{p^r\}$ is the atomic piece of $\Omega P^{2n+1}(p^r)$ containing the bottom cell and $n_\alpha$ runs over an index set of integers greater than $4n-1$ and there are only finitely many $n_\alpha$ with a given value. Our theorem then follows from Proposition \ref{qhyperwedge} and Proposition \ref{qhyperwedge2}.
\end{proof}

\subsection{$p=2$ and $r\geq 2$}
First, let us prove the following lemma to complete the proof of Proposition \ref{qhyperwedge2}.
\begin{lemma}\label{pi2}
There exist $\mathbb{Z}/2^{r+1}$-summands in the homotopy groups $\pi_{4n-2}{P^{2n}(2^r)}$ and $\pi_{12n-2}(P^{2n+1}(2^r))$ for any sufficiently large $n$.
\end{lemma}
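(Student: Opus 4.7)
The plan is to mimic the Cohen-Moore-Neisendorfer construction of order $p^{r+1}$ classes (Theorem $1.4$ of \cite{Cohen}, strengthened in \cite{Neisen3}) at the prime $2$, using Theriault's analog \cite{Theriault} which is available precisely because $r\ge 2$. In both cases the class is produced as a geometric Samelson product, and its order is tracked via the Bockstein spectral sequence of the relevant loop space.

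For $\pi_{4n-2}(P^{2n}(2^r))$, let $\mu\colon P^{2n-1}(2^r)\to\Omega P^{2n}(2^r)$ be the James adjoint of the identity of $P^{2n}(2^r)=\Sigma P^{2n-1}(2^r)$. The plan is to produce the required class as the Samelson square $\langle\mu,\mu\rangle$. Its source $P^{2n-1}(2^r)\wedge P^{2n-1}(2^r)$ splits as $P^{4n-2}(2^r)\vee P^{4n-3}(2^r)$ by Neisendorfer's smash formula \cite{Neisen}; restricting to the $P^{4n-2}(2^r)$-summand and taking adjoints gives a candidate class in $\pi_{4n-2}(P^{2n}(2^r))$. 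The order-$2^{r+1}$ claim is then extracted from the Bockstein spectral sequence of the atomic factor of $\Omega P^{2n}(2^r)$, run at the prime $2$ by Theriault in the spirit of \cite{Neisen3}, and the splitting as a direct summand follows by applying an idempotent coming from the Selick-Wu decomposition of Theorem \ref{Decom}.

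For $\pi_{12n-2}(P^{2n+1}(2^r))$, I would reduce to the first part. Theriault's $p=2$, $r\ge 2$ analog of the Cohen-Moore-Neisendorfer product decomposition
\[
\Omega P^{2n+1}(2^r)\simeq T^{2n+1}\{2^r\}\times \Omega\Sigma\bigvee_{\alpha}P^{n_\alpha}(2^r)
\]
has indices $n_\alpha$ running over a set governed by the Hall basis of the free Lie algebra on two generators of degrees $2n-1$ and $2n$; the weight-three Hall basis element $[v,[u,v]]$, where $|u|=2n-1$ and $|v|=2n$, contributes $n_\alpha=6n-1$. Consequently $P^{6n}(2^r)=\Sigma P^{6n-1}(2^r)$ is a wedge summand of $\Sigma\bigvee_{\alpha}P^{n_\alpha}(2^r)$, so $\Omega P^{6n}(2^r)$ is a homotopy retract of $\Omega P^{2n+1}(2^r)$. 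Applying the first assertion of the lemma with $n$ replaced by $3n$ produces a $\mathbb{Z}/2^{r+1}$-summand in $\pi_{12n-2}(P^{6n}(2^r))$, which embeds in $\pi_{12n-2}(P^{2n+1}(2^r))$ by functoriality of the retract.

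The main obstacle is the Bockstein analysis underlying the first assertion. At odd primes the order-$p^{r+1}$ property for the Samelson square is essentially formal once one identifies the target of the higher Bockstein in the homology of the loop space; at $p=2$ the classical argument is obstructed by Steenrod squares, and precisely the hypothesis $r\ge 2$ is what lets Theriault's version of the argument go through. Exhibiting the index $n_\alpha=6n-1$ needed for the second assertion is routine Hall-basis bookkeeping by comparison.
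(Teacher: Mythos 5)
Your reduction for the second claim is a legitimate alternative to the paper's: you locate $P^{6n}(2^r)$ as a wedge summand coming from the weight-three Hall basis element in the Cohen--Moore--Neisendorfer-type product decomposition (valid for $p=2$, $r\geq 2$ by \cite{Cohen89}), whereas the paper uses Proposition $3.1$ of \cite{Chen} to identify $P^{6n}(2^r)\simeq\tilde{L}_3(P^{2n}(2^r))$ and then invokes Theorem \ref{Liedecom}. Both give that $\Omega P^{6n}(2^r)$ is a retract of $\Omega P^{2n+1}(2^r)$; the paper's Lie-functor route is more in keeping with the machinery used throughout, while yours requires a bit of Hall-basis bookkeeping. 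Either works.

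The first claim, however, has a genuine gap in your outline. The actual argument is Cohen's (Section $21$ of \cite{Cohen86}): the class corresponding to $[\mu,\nu]\in H_{4n-3}(\Omega P^{2n}(2^r);\mathbb{Z}/2^r)$ is constructed for all $n\geq 2$, and Cohen proves it has order \emph{exactly} $2^{r+1}$ only under two hypotheses: $r\geq 2$ \emph{and} the Whitehead product $\omega_{2n-1}\in\pi_{4n-3}(S^{2n-1})$ is not divisible by $2$. Your write-up mentions only the $r\geq 2$ hypothesis and attributes the order computation to Theriault's Bockstein argument, but Theriault's result in \cite{Theriault} is an exponent theorem, i.e.\ an upper bound $2^{r+1}$ on the order of torsion, which by itself says nothing about the existence of $\mathbb{Z}/2^{r+1}$-summands in the specific degree $4n-2$. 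The lower bound needed here is precisely what Cohen's conditional analysis supplies, and the condition on $\omega_{2n-1}$ is not automatic: it is equivalent to a statement about Kervaire invariant one elements, and the paper closes the gap by invoking Hill--Hopkins--Ravenel \cite{Hill}, which gives non-divisibility of $\omega_{2n-1}$ for all $n>64$ and hence the ``sufficiently large $n$'' in the lemma. Without this input your argument does not establish the order claim, and this is the one genuinely nontrivial ingredient in the paper's proof.
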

\begin{proof}
In Section $21$ of \cite{Cohen86}, Cohen has constructed an element in $\pi_{4n-3}(\Omega P^{2n}(2^r))$ which corresponds to the Lie element $[\mu, \nu]\in H_{4n-3}(\Omega P^{2n}(2^r); \mathbb{Z}/2^r)$ for any $n\geq 2$, and also proved that it is of order exactly $2^{r+1}$ if $r\geq 2$ and the Whitehead product $\omega_{2n-1}$ is not divisible by $2$. Then according to the recent famous solution to the Kervaire invariant one problem by Hill, Hopkins and Ravenel \cite{Hill}, we know that $\omega_{2n-1}$ is not divisible by $2$ for any $n>64$. Hence, there exists a $\mathbb{Z}/2^{r+1}$-summand in $\pi_{4n-2}{P^{2n}(2^r)}$ for $n>64$.

For the odd dimension case, there is a homotopy equivalence
\begin{equation*}
P^{6n}(2^r)\simeq \Sigma^{1+(4n-1)}P^{2n}(2^r)\simeq \tilde{L}_3(P^{2n}(2^r))
\end{equation*}
by Proposition $3.1$ of \cite{Chen}. Hence the composition map
\begin{equation*}
\Omega P^{6n}(2^r) \simeq \Omega \tilde{L}_3(P^{2n}(2^r)) \hookrightarrow \Omega P^{2n+1}(2^r)
\end{equation*}
admits a left homotopy inverse by Theorem \ref{Liedecom}, and then the lemma follows from previous discussion.
\end{proof}
\begin{theorem}\label{moore2r}
$P^{n}(2^r)$ is $\mathbb{Z}/2^{r}$- and $\mathbb{Z}/2^{r+1}$-hyperbolic for each $n\geq 3$ and $r\geq 2$.
\end{theorem}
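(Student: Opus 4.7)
The plan is to follow the strategy of Theorem \ref{oddmoore} almost verbatim, substituting the $2$-primary Cohen--Moore--Neisendorfer decompositions that are valid for $r\geq 2$, and using Lemma \ref{pi2} as the source of $\mathbb{Z}/2^{r+1}$-summands. First I would reduce to the odd-dimensional case: for $r\geq 2$ the $2$-primary analogue of Cohen's decomposition gives
\begin{equation*}
\Omega P^{2n+2}(2^r)\simeq S^{2n+1}\{2^r\}\times \Omega\bigvee_{m=0}^{\infty}P^{4n+2mn+3}(2^r),
\end{equation*}
so it suffices to treat $P^{2n+1}(2^r)$. Then I would invoke the analogous odd-dimensional splitting
\begin{equation*}
\Omega P^{2n+1}(2^r)\simeq \Omega\Sigma \bigvee_\alpha P^{n_\alpha}(2^r)\times T^{2n+1}\{2^r\},
\end{equation*}
valid for $r\geq 2$ by \cite{Cohen89, Neisen3}, and pick two indices $n_\alpha\leq n_\beta$ from the resulting wedge.

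Next I would run the same Hilton--Milnor and smash-decomposition machinery used in Theorem \ref{oddmoore}. Applying Hilton--Milnor to $\Omega\Sigma(P^{n_\alpha}(2^r)\vee P^{n_\beta}(2^r))$ produces, for each weight $s$, a total of $W(s)$ iterated brackets, each contributing a loop factor of the form $\Omega\Sigma\,\Sigma^{(n_\beta-n_\alpha)a_2}(P^{n_\alpha}(2^r))^{\wedge s}$. Using Neisendorfer's smash decomposition
\begin{equation*}
P^n(2^r)\wedge P^n(2^r)\simeq P^{2n}(2^r)\vee P^{2n-1}(2^r),
\end{equation*}
which also holds for $r\geq 2$ by \cite{Neisen}, iterated $s-1$ times gives
\begin{equation*}
(P^{n_\alpha}(2^r))^{\wedge s}\simeq P^{n_\alpha}(2^r)\cdot(P^{n_\alpha}(2^r)+P^{n_\alpha-1}(2^r))^{s-1},
\end{equation*}
a wedge of $2^{s-1}$ Moore spaces whose dimensions are linear in $s$. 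Composition yields an injection of homotopy groups writing $\pi_{\ast+1}(P^{2n+1}(2^r))$ as a direct sum of at least $W(s)\cdot 2^{s-1}$ copies of the homotopy groups of these Moore spaces.

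Finally, Lemma \ref{pi2} provides $\mathbb{Z}/2^{r+1}$-summands in $\pi_{4m-2}(P^{2m}(2^r))$ and in $\pi_{12m-2}(P^{2m+1}(2^r))$ for all sufficiently large $m$, covering both parities of the target dimension. Plugging these in and running exactly the arithmetic--sequence argument of Step~$4$ of Theorem \ref{hyper2}, followed by Lemma \ref{seq}, yields the exponential growth of $\mathbb{Z}/2^{r+1}$-summands in $\pi_\ast(P^{2n+1}(2^r))$. The main technical nuisance, rather than a deep obstacle, is that the Moore spaces appearing after the smash decomposition realize both parities of dimension as $(i,j)$ with $i+j=s$ varies, so one must check that for each residue class actually arising Lemma \ref{pi2} supplies the required summand in the right degree. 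Because Lemma \ref{pi2} handles both parities and the gap between consecutive usable $m$ is bounded, the final partition-into-arithmetic-subsequences argument of Lemma \ref{seq} closes the proof exactly as in Theorem \ref{oddmoore}.
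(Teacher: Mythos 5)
Your proposal is correct and follows essentially the same route as the paper: both invoke the $r\geq 2$ Cohen--Moore--Neisendorfer loop decomposition of $\Omega P^m(2^r)$ into $T^m(2^r)\times\Omega\bigvee P^{n_\alpha}(2^r)$, the Neisendorfer smash splitting $P^n(2^r)\wedge P^m(2^r)\simeq P^{n+m}(2^r)\vee P^{n+m-1}(2^r)$, Lemma \ref{pi2}, and then rerun the Hilton--Milnor and arithmetic-sequence argument of Theorem \ref{oddmoore}. The only cosmetic difference is that you go through the even-dimensional decomposition $\Omega P^{2n+2}(2^r)$ first before the odd case, while the paper cites the single unified decomposition for all $m\geq 3$ and then says ``argue as in Theorem \ref{oddmoore}''; your explicit exponent $s-1$ in $(P^{n_\alpha}(2^r))^{\wedge s}\simeq P^{n_\alpha}(2^r)\cdot(P^{n_\alpha}(2^r)+P^{n_\alpha-1}(2^r))^{s-1}$ is in fact the correct one.
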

\begin{proof}
In this case, there is also a homotopy decomposition \cite{Cohen89}
\begin{equation*}
\Omega P^m(2^r)\simeq T^{m}(2^r)\times \Omega \bigvee_{n\in \mathcal{I}} P^{n}(2^r)
\end{equation*}
for each $m\geq 3$ and $r\geq 2$. The theorem again follows from Proposition \ref{qhyperwedge} and Proposition \ref{qhyperwedge2}.
\end{proof}
\begin{remark}
The readers may notice that we did not invoke Kervaire invariant one problem for the proof of hyperbolicity of odd primary Moore spaces, while it seems to be necessary for that of ${\rm mod}~2^r$ Moore spaces. The explanation is as follows. In order to get higher order torsions in the integral homotopy groups of odd primary Moore spaces, Cohen-Moore-Neisendorfer \cite{Cohen} actually studied homotopy groups with coefficients introduced by Frank Peterson. In this context, Samelson products are defined through the bouquet splitting of suspensions of Moore spaces (\ref{qnot2}).
As pointed by Neisendorfer \cite{Neisen2013}, unlike the odd prime case the ${\rm mod}~ 2^r$ homotopy groups ($r\geq 2$) is not a graded Lie algebra under the Samelson product (further the squaring operations probably can not be defined). The key part of proof of Cohen-Moore-Neisendorfer is to use the induced graded Lie algebra structures on ${\rm mod}~ p$ homotopy Bockstein spectral sequences which we do not have in the ${\rm mod}~ 2$ case. Then one can not apply the method of Cohen-Moore-Neisendorfer to obtain higher order torsions. Fortunately, Cohen \cite{Cohen86} reduced a particular case of the problem to the indivisibility of Whitehead products by classical homotopy techniques, which were deeply investigated by Hill, Hopkins and Ravenel \cite{Hill}.
\end{remark}

\section{Functorial decomposition and hyperbolicity}
\noindent  By observing the discussions in the previous sections, we see there are four important ingredients in our proof of local hyperbolicity of a suspended complex $X$:
\begin{itemize}
\item[0)] The Hilton-Milnor Theorem.
\item[1)] Homotopy decomposition of $\Omega X$ with loop of a bouquet of suspensions of $X$ as one of the factors.
\item[2)] Bouquet splitting of smash product of suspensions of $X$.
\item[3)] Suitable knowledge of homotopy groups of suspensions of $X$.
\end{itemize}

Hence it is possible to generalize our concrete results to some abstract theorems. In this section, we study the hyperbolicity with the help of functorial decomposition, for which let us recall a powerful functorial decomposition of loop suspension developed by Selick and the second author.

We start with the algebraic side. Let $V$ be a vector space over a field (e.g., $\mathbb{Z}/p$) and $T(V)$ be the tensor algebra generated by $V$. $T(V)$ becomes a Hopf algebra by letting $V$ be primitive. Consider $T$ as a functor from modules to coalgebras. Selick and the second author (Theorem $1.4$ of \cite{Selick}) showed that there exists a functor $A^{\rm min}$ as a smallest retract of $T$ in the following sense
\begin{itemize}
\item[1)] $A^{\rm min}(V)$ is a functorial coalgebra retract of $T(V)$.
\item[2)] $V\subseteq A^{\rm min}(V)$.
\item[3)] If $A(V)$ is any functorial coalgebra retract of $T(V)$ with $V\subseteq A(V)$, then $A^{\rm min}(V)$ is a functorial coalgebra retract of $A(V)$.
\end{itemize}
Furthermore, $A^{\rm min}$ is unique up to natural equivalence.
\begin{theorem}[\cite{Selick}, Theorem $6.5$, Corollary $8.9$]
There is a functorial coalgebra decomposition
\begin{equation*}
T(V) \cong  A^{\rm min}(V)\otimes T(\bigoplus_{n=2}^{\infty}Q^{\rm max}_{n}(V)),
\end{equation*}
where $Q^{\rm max}_{n}(V)$ is a functorial retract of $V^{\otimes n}$ and a sub-functor of $L_n(V)$, and $T(\bigoplus_{n=2}^{\infty}Q^{\rm max}_{n}(V))$ is a sub Hopf algebra of $T(V)$. \hfill $\Box$
\end{theorem}
These algebraic results admit geometric realizations.
\begin{theorem}[\cite{Selick1}, Theorem $1.2$]\label{Decom}
Let $X$ be any path-connected $p$-local CW complex. Then there are homotopy functors $\tilde{Q}^{\rm max}_{n}$, $n\geq 2$ and $\tilde{A}^{\rm min}$ from path-connected $p$-local CW complexes to spaces with the following properties

$1)$  $\tilde{Q}^{\rm max}_{n}$ is a functorial retract of $\Sigma X^{\wedge n}$ and there exists a map $f$ such that
\begin{equation*}
\tilde{Q}^{\rm max}_{n}(X)\simeq {\rm hocolim}_{f} ~\Sigma X^{\wedge n}
\end{equation*}

$2)$ there is a functorial decomposition
\begin{equation*}
\Omega\Sigma X\simeq \tilde{A}^{\rm min}(X)\times \Omega\Big(   \bigvee_{n=2}^{\infty} \tilde{Q}^{\rm max}_{n}(X)\Big).
\end{equation*} \hfill $\Box$
\end{theorem}
Now we can prove our main theorems of this section, the ${\rm mod}~2$ version of which is as follows.
\begin{theorem}\label{hyper2}
Suppose $X$ is a path-connected finite complex localized at $2$ such that

$1)$ $\Sigma^{nM+1}X$ is a homotopy retract of $\tilde{Q}^{\rm max}_{nl+1}(X)$ for some fixed $M$, $l\in \mathbb{Z}^+$ and sufficiently large $n$;

$2)$ There exists some $n$ satisfying $1)$ such that the multiplicity of $\Sigma^{nM+1}X$ in the decomposition of $\Sigma X^{\wedge(nl+1)}$ is bigger than $1$, i.e., there exists some map
\begin{equation*}
\Sigma^{nM+1}X\vee \Sigma^{nM+1}X\longrightarrow \Sigma X^{\wedge(nl+1)}
\end{equation*}
which admits a left homotopy inverse;

$3)$ There is a $\mathbb{Z}/2^r$-summand in $\pi_{Kn+k}(\Sigma^{An+a}X)$ for sufficiently large $n$ and some fixed $K$, $A\in\mathbb{Z}^+$ and $k$, $a\in \mathbb{Z}$.

Then $\Sigma X$ is $\mathbb{Z}/2^r$-hyperbolic if ${\rm g.c.d.}(M+li,A)=1$ for any integer $i$ with $0\leq i\leq A-1$.
\end{theorem}

\begin{proof}
We prove this theorem by four steps.

\textit{\textbf{Step $1$}: prove that there exists a $\mathbb{Z}/2^r$-summand in $\pi_{K^{(i)}n+k^{(i)}}(\Sigma^{An+i}X)$ for each integer $i$ with $0\leq i\leq A-1$ and sufficiently large $n$, where $K^{(i)}\in \mathbb{Z}^+$, $k^{(i)}\in \mathbb{Z}$ are some constants.}

By Lemma $2.2$ of \cite{Selick1} and Lemma $2.1$ of \cite{Selick}, $\tilde{Q}^{\rm max}_{nl+1}(X)$ is a functorial homotopy retract of $\Sigma X^{\wedge(nl+1)}$ such that
\begin{equation*}
\tilde{Q}^{\rm max}_{nl+1}(X)\simeq {\rm hocolim}_{f} ~\Sigma X^{\wedge(nl+1)}
\end{equation*}
for some $f=\sum\limits_{\sigma\in S_{nl+1}} k_\sigma \sigma$. When $X$ is sphere, the map $\sigma$ induces identity on ${\rm mod}~2$ homology. The functoriality
of $\tilde{Q}^{\rm max}$ then implies
the composition map
\begin{equation*}
\Sigma^{nl+1} \tilde{Q}^{\rm max}_{nl+1}(X)\rightarrow \Sigma^{nl+2}X^{\wedge nl+1}\simeq \Sigma(\Sigma X)^{\wedge nl+1}\rightarrow
\tilde{Q}^{\rm max}_{nl+1}(\Sigma X)
\end{equation*}
is a homotopy equivalence.
More generally, we have
\begin{equation*}
\tilde{Q}^{\rm max}_{nl+1}(\Sigma^t X)\simeq \Sigma^{(nl+1)t} \tilde{Q}^{\rm max}_{nl+1}(X).
\end{equation*}
Then by condition $1)$, we have $\Sigma^{nM+1+(nl+1)t}X$ is a homotopy retract of $\tilde{Q}^{\rm max}_{nl+1}(\Sigma^t X)$ for sufficiently large $n$. Then we see that the loop map
\begin{equation*}
\Omega \Sigma^{nM+1+(nl+1)t}X\rightarrow \Omega \tilde{Q}^{\rm max}_{nl+1}(\Sigma^t X) \rightarrow \Omega \Sigma^{t+1} X
\end{equation*}
has a left homotopy inverse. Now let $t=An^\prime +i$, we see that $nM+1+(nl+1)t=A(nl+1)n^\prime+(M+li)n+i+1$. If
\begin{equation}\label{equiv}
A(nl+1)n^\prime+(M+li)n+i+1\equiv a ~{\rm mod}~ A
\end{equation}
holds, then by condition $3)$ there is a $\mathbb{Z}/2^r$-summand in $\pi_\ast(\Omega \Sigma^{A(nl+1)n^\prime+(M+li)n+i+1}X)$, and then in $\pi_\ast(\Omega\Sigma^{An^\prime+i+1}X)$ for sufficiently large $n$. Since by hypothesis ${\rm g.c.d.}(M+li,A)=1$ for any integer $i$ with $0\leq i\leq A-1$, then the congruent equation (\ref{equiv}) has a solution for each $i$ with $0\leq i\leq A-1$. For each $i$, choose a sufficiently large $n$ satisfying (\ref{equiv}), then there is a $\mathbb{Z}/2^r$-summand in $\pi_\ast(\Omega\Sigma^{An^\prime+i+1}X)$ and we see the claim of Step $1$ holds.

\textit{\textbf{Step $2$}: prove that the multiplicity of $\Sigma^{snM+1}X$ in the decomposition of $\Sigma X^{\wedge(snl+1)}$ is at least $2^s$.}

This step can be done easily by using condition $2)$.

\textit{\textbf{Step $3$}: prove that $\Sigma^\lambda X \vee \Sigma^\lambda X$ is $\mathbb{Z}/2^r$-hyperbolic for any $\lambda>0$.}

With the results in Step $1$ and $2$, Step $3$ can be showed by similar argument as that in the proof of Proposition \ref{qhyperwedge}.

\textit{\textbf{Step $4$}: prove the theorem.}

By Theorem \ref{Decom}, we have
\begin{equation*}
\Omega\Sigma X\simeq  \Omega\Big(   \bigvee_{n=2}^{\infty} \tilde{Q}^{\rm max}_{n}(X)\Big)\times\tilde{A}^{\rm min}(X).
\end{equation*}
Also, condition $1)$ implies
\begin{equation*}
\Omega\Big( \bigvee_{{\rm large}~n} \Sigma^{nM+1} X\Big)\hookrightarrow \Omega\Big(   \bigvee_{n=2}^{\infty} \tilde{Q}^{\rm max}_{nl+1}(X)\Big)
\end{equation*}
has a left homotopy inverse. Hence, $\Omega\Big( \bigvee_{{\rm large}~n} \Sigma^{nM+1} X\Big)$ is a homotopy retract of
$\Omega\Sigma X$. Fix some $n$ satisfying condition $1)$ and $2)$. We see in particular
\begin{equation*}
\Omega\Sigma X\hookleftarrow \Omega\Big(\Sigma^{nM+1} X\vee \Sigma^{(n+1)M+1} X \vee\cdots \vee \Sigma^{(n+nl)M+1} X \Big)
\end{equation*}
admits a left homotopy inverse.
Then by the Hilton-Milnor theorem, we have
\begin{equation*}
\Omega\Sigma\Big(\Sigma^{nM} X\wedge \Sigma^{(n+1)M} X \wedge\cdots \wedge \Sigma^{(n+nl)M} X \Big)
\simeq \Omega\Sigma\Big( \Sigma^{(nl+1)(n+1/2 nl)M}X^{\wedge (nl+1)}\Big)
\end{equation*}
is a factor of $\Omega\Sigma X$ which corresponds to the term $[y_0, y_1,\ldots ,y_{nl}]$ in the free Lie algebra $\mathbb{L}[y_0, y_1,\ldots ,y_{nl}]$. Denote $D_n=(nl+1)(n+1/2 nl)$, then by condition $2)$ we have
\begin{equation*}
\Omega\Sigma\big( \Sigma^{D_nM}X^{\wedge (nl+1)}\big)\hookleftarrow \Omega\Sigma^{D_nM}(\Sigma^{nM+1}X\vee \Sigma^{nM+1}X)
\end{equation*}
which admits a left homotopy inverse for some $n$. The theorem then follows from Step $3$.
\end{proof}

\begin{theorem}\label{hyperp}
Suppose $X$ is a path-connected finite complex localized at an odd prime $p$ such that

$1)$ $\Sigma^{2nM+1}X$ is a homotopy retract of $\tilde{Q}^{\rm max}_{nl+1}(X)$ for some fixed $M$, $l\in \mathbb{Z}^+$ and sufficiently large $n$;

$2)$ There exists some $n$ satisfying $1)$ such that the multiplicity of $\Sigma^{2nM+1}X$ in the decomposition of $\Sigma X^{\wedge(nl+1)}$ is bigger than $1$, i.e., there exists some map
\begin{equation*}
\Sigma^{2nM+1}X\vee \Sigma^{2nM+1}X\longrightarrow \Sigma X^{\wedge(nl+1)}
\end{equation*}
which admits a left homotopy inverse;

$3)$ There is a $\mathbb{Z}/p^r$-summand in $\pi_{Kn+k}(\Sigma^{2An+2a+1}X)$ for sufficiently large $n$ and some fixed $K$, $A\in\mathbb{Z}^+$ and $k$, $a\in \mathbb{Z}$.

Then $\Sigma X$ is $\mathbb{Z}/p^r$-hyperbolic if ${\rm g.c.d.}(M+li, A)=1$ for any integer $i$ with $0\leq i\leq A-1$.
\end{theorem}

\begin{proof}
The proof of this theorem is the same as that of Theorem \ref{hyper2} except minor modifications in Step $1$.
That is, in this ${\rm mod}~p$ case $\sigma$ induces ${\rm sgn}~(\sigma)$ on ${\rm mod}~p$ homology when $X$ is sphere, and we only have
\begin{equation*}
\tilde{Q}^{\rm max}_{nl+1}(\Sigma^2 X)\simeq \Sigma^{2(nl+1)} \tilde{Q}^{\rm max}_{nl+1}(X).
\end{equation*}
Similarly,
\begin{equation*}
\tilde{Q}^{\rm max}_{nl+1}(\Sigma^{2t} X)\simeq \Sigma^{2(nl+1)t} \tilde{Q}^{\rm max}_{nl+1}(X).
\end{equation*}
These modifications then results in some numerical changes in the statement of the theorem.
\end{proof}
\begin{remark}
We note that Moore spaces satisfy the conditions of Theorem \ref{hyper2} or Theorem \ref{hyperp} except the first condition involving $\tilde{Q}^{\rm max}$. But there are some evidences which suggest that condition $1)$ may be valid for Moore spaces.

Consider the hyperbolicity of $P^{n+1}(2)$. In Section $4.1$ of \cite{Wu2003}, the second author has proved that there is a Moore space as a summand of $\tilde{Q}^{\rm max}_i(P^{n+1}(2))$ for each $i=3$, $5$, $7$, $9$. Also, it has been proved in \cite{Chen} that the iterated suspension of $P^{n+1}(2)$ is a homotopy retract of $\tilde{L}_{2k+1}(P^{n+1}(2))$ (determined by the Lie elements ${\rm ad}^{k}([\mu,\nu](\mu))$ and ${\rm ad}^{k}([\mu,\nu](\nu))$). These two facts suggest that this summand may lie in $\tilde{Q}^{\rm max}_{2k+1}(P^{n+1}(2))$.
\end{remark}

\begin{remark}
In the next section, we will study a family of spaces which satisfy the condition $2)$ of our theorems. However, in general the study of hyperbolicity along our ideas heavily relies on the progress of the study of the functor $\tilde{Q}^{\rm max}$, for which the work in \cite{Wu2003} may be a first reference.
\end{remark}

\section{Possible candidates to be $p$-hyperbolic}
\noindent In this section, we may discuss some other examples which may be served as good candidates to have hyperbolic properties, and partly justify the choice of conditions of the theorems in previous section. As we pointed out before, one of the key points is to find some suspensions of the space under consideration which is the homotopy retract of the loop of the space. Further, it will be much helpful if the multiplicity of this suspension is greater than one. Our exposition in this section will exactly follow this idea.

Suppose $X$ is a suspended finite $CW$-complex localized at $p$ such that either $V_{{\rm odd}}=0$ or $V_{{\rm even}}=0$ where $V=\tilde{H}_\ast(X; \mathbb{Z}/p)$. Denote that ${\rm dim}~V=l$ and $M$ to be the sum of the degrees of the generators of $V$. Then in \cite{Beben}, Beben and the second author have proved that if $1<l<p-1$, $\Sigma^MX$ is a homotopy retract of $L_{l+1}(X)$. By their construction, $\Sigma^MX$ is homotopy equivalent to the telescope $T(g)$ of the composition map
\begin{equation*}
g: X^{\wedge(l+1)}\stackrel{f_{s_l}\wedge {\rm id}}{\longrightarrow}X^{\wedge(l+1)}\stackrel{f_{\beta_{l+1}}}{\longrightarrow}X^{\wedge(l+1)}.
\end{equation*}
For defining $f_{s_l}$ and $f_{\beta_{l+1}}$, first notice that we can define a right action of $\mathbb{Z}/p[S_k]$ on $V^{\otimes k}$ by permuting factors in a graded sense where $S_k$ denotes the symmetric group on $k$ letters. Then we may take three types of special elements in $\mathbb{Z}/p[S_k]$ to get the corresponding self-morphism of $V^{\otimes k}$:
\begin{equation*}
\hat{s}_k=\sum\limits_{\sigma\in S_k} {\rm sgn}~(\sigma) \sigma,
\end{equation*}
\begin{equation*}
\bar{s}_k=\sum\limits_{\sigma\in S_k}\sigma,
\end{equation*}
and the Dynkin-Specht-Wever elements $\beta_k$ (see Section $4$).
We now define $s_l$ by $s_l=\bar{s}_l$ if $V_{{\rm even}}=0$ and $s_l=\hat{s}_l$ if $V_{{\rm odd}}=0$. Since $X$ is a suspension, the self morphisms of $V^{\otimes (l+1)}$ determined by $s_l\otimes {\rm id}$ and $\beta_{l+1}$ can be realized as self-maps of $X^{\wedge(l+1)}$ which are exactly $f_{s_l}\wedge {\rm id}$ and $f_{\beta_{l+1}}$. Then $g$ can be formed as above such that ${\rm Im}~\tilde{H}_\ast(g)\cong \Sigma^MV$ and $\tilde{H}_\ast(g)$ is an idempotent up to an unit if $l<p-1$, and hence the telescope $T(g)$ is a homotopy retract of $X^{\wedge(l+1)}$. Further by checking the homology, $T(g)\simeq \Sigma^M X$ and indeed lies in $L_{l+1}(X)$.

By the above construction, we observe that there are other ways to produce $\Sigma^M X$ to be a homotopy retract of $L_{l+1}(X)$. That is, we simply let $s_l$ act on any others $l$ factors in $V^{\otimes(l+1)}$ rather than the first $l$ ones. To be precise, we may define the compositions
\begin{equation*}
g_{\sigma_j}: X^{\wedge(l+1)}\stackrel{\sigma_j}{\longrightarrow} X^{\wedge(l+1)}\stackrel{g}{\longrightarrow} X^{\wedge(l+1)}\stackrel{\sigma_j^{-1}}{\longrightarrow}X^{\wedge(l+1)},
\end{equation*}
for $\sigma_j=\bigl(\begin{smallmatrix}
    1 &  \cdots & j-1 & j      & j+1     & \cdots &  l+1\\
    1 & \cdots & j-1 & l+1      & j     & \cdots &  l
  \end{smallmatrix}\bigr)$
with $1\leq j\leq l+1$.  Notice that $g_{\sigma_{l+1}}=g$, and ${\rm Im}~\tilde{H}_\ast(g_{\sigma_j})={\rm Im}~\tilde{H}_\ast(g) \cdot\sigma_j$. Since we also have ${\rm Im}~\tilde{H}_\ast(g)= {\rm Im}~(s_l\otimes {\rm id})$ \cite{Beben}, we see that ${\rm Im}~\tilde{H}_\ast(g_{\sigma_j})$ is $\mathbb{Z}/p$-generated by $\big(s_l(x_1\otimes x_2\otimes\cdots \otimes x_l)\otimes x_i\big)\cdot \sigma_j$ with $1\leq i\leq l$ where $x_1$, $x_2$, $\ldots$, $x_l$ is a basis of $V$.

Our aim is to determine a lower bound of the multiplicity of $\Sigma^MX$ in the homotopy decomposition of $L_{l+1}(X)$. To achieve this, we first work on the homology level and prove that the sum of any $l$ out of the $l+1$ images ${\rm Im}~\tilde{H}_\ast(g_{\sigma_j})\in V^{\otimes (l+1)}$ ($1\leq j\leq l+1$) is indeed a direct sum, but the sum of all the $l+1$ images is not.
\begin{lemma}\label{linear}
The equality
\begin{equation}\label{dependent}
\sum\limits_{i,j} c_{i,j} \big(s_l(x_1\otimes x_2\otimes\cdots \otimes x_l)\otimes x_i\big)\cdot \sigma_j =0
\end{equation}
holds in $V^{\otimes(l+1)}$ for some constants $c_{i,j}$'s if and only if
\begin{equation}\label{coefficients}
c_{i,1}=-c_{i,2}=c_{i,3}=\cdots=(-1)^{l}c_{i,l+1}
\end{equation}
for any $1\leq i\leq l$.
\end{lemma}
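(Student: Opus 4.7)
My plan is to expand equation (\ref{dependent}) in the natural basis of $V^{\otimes(l+1)}$ consisting of tensors $x_{i_1}\otimes\cdots\otimes x_{i_{l+1}}$ with each $i_k\in\{1,\ldots,l\}$, and to read off the constraints on the $c_{i,j}$'s coefficient by coefficient. Both directions of the lemma will then emerge from a single linear-algebraic computation.

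First I would pin down the support of each summand. Since either $V_{\mathrm{odd}}=0$ or $V_{\mathrm{even}}=0$, a direct check shows that $s_l(x_1\otimes\cdots\otimes x_l)$ equals $\sum_{\tau\in S_l}\mathrm{sgn}(\tau)\,x_{\tau(1)}\otimes\cdots\otimes x_{\tau(l)}$ up to a global scalar. Hence the support of $T_j(i):=(s_l(x_1\otimes\cdots\otimes x_l)\otimes x_i)\cdot\sigma_j$ consists precisely of basis vectors in which $x_i$ appears at position $j$ together with exactly one further position, and in which each remaining $x_k$ ($k\neq i$) appears exactly once. In particular, if a basis vector $v$ has $x_i$ at positions $a<b$, then only $T_a(i)$ and $T_b(i)$ can contribute to the coefficient of $v$ in (\ref{dependent}).

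Next I would identify the two contributions. Let $\tau_a,\tau_b\in S_l$ be the unique permutations such that $v$ occurs in $T_a(i)$, respectively $T_b(i)$, with sign $\mathrm{sgn}(\tau_a)$, respectively $\mathrm{sgn}(\tau_b)$ (each further multiplied by a global graded factor depending only on $j=a,b$). A direct bijective comparison of the ``insert $x_i$ at position $a$'' and ``insert $x_i$ at position $b$'' patterns shows that $\tau_b=\tau_a\circ\gamma_{a,b}$, where $\gamma_{a,b}$ is the cyclic permutation of the block $\{a,a+1,\ldots,b-1\}$ of length $b-a$. Therefore $\mathrm{sgn}(\tau_b)=(-1)^{b-a-1}\mathrm{sgn}(\tau_a)$, and extracting the coefficient of $v$ in (\ref{dependent}) yields the scalar equation
\[
c_{i,a}+(-1)^{b-a-1}c_{i,b}=0,\qquad 1\le a<b\le l+1.
\]

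Specializing to $b=a+1$ for $a=1,\ldots,l$ forces $c_{i,a}=-c_{i,a+1}$, which is exactly the alternating pattern (\ref{coefficients}); the relations for $b-a\ge 2$ are then automatic, since $(-1)^{a-1}+(-1)^{b-a-1}(-1)^{b-1}=(-1)^{a-1}+(-1)^a=0$. This proves both directions simultaneously. The chief obstacle is the bookkeeping in the second step: correctly identifying the cycle $\gamma_{a,b}$ of length $b-a$ so that its sign is $(-1)^{b-a-1}$, and verifying that the graded Koszul signs coming from the action of $\sigma_a$ and $\sigma_b$ on $V^{\otimes(l+1)}$ enter only as a common factor that cancels out of the final relation.
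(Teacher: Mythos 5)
Your proposal follows essentially the same route as the paper: expand in the monomial basis of $V^{\otimes(l+1)}$, observe that each relevant monomial has $x_i$ doubled at exactly two positions $a<b$ and therefore receives contributions only from the $\sigma_a$- and $\sigma_b$-terms, and compare the two $S_l$-permutations to read off a pairwise relation on the $c_{i,j}$'s. The paper carries out the same monomial count and arrives at the identical pairwise relation $c_{i,s}+(-1)^{s-t-1}c_{i,t}=0$; you add a useful explicit step that the paper leaves implicit, namely identifying $\tau_a^{-1}\tau_b$ as a cycle of length $b-a$ on the block $\{a,\dots,b-1\}$ and verifying that the adjacent relations $c_{i,a}=-c_{i,a+1}$ already imply the others.

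One caveat about the point you flag as ``the chief obstacle.'' Your claim that the Koszul signs coming from $\sigma_a$ and $\sigma_b$ enter only as a common factor is true when $V_{\rm odd}=0$ (all generators have even degree, so every Koszul sign is $+1$), but it fails when $V_{\rm even}=0$: there the permutation $\sigma_j$, being a cycle of length $l+2-j$ acting on a tensor of odd-degree classes, contributes a sign $\mathrm{sgn}(\sigma_j)=(-1)^{l+1-j}$, so $\sigma_a$ and $\sigma_b$ differ by $(-1)^{b-a}$. Carrying this through turns $c_{i,a}+(-1)^{b-a-1}c_{i,b}=0$ into $c_{i,a}-c_{i,b}=0$, i.e.\ all $c_{i,j}$ equal rather than alternating (a small example with $l=2$ confirms this). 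The paper's own proof silently makes the same simplification, and the discrepancy is harmless for Proposition~\ref{multiBeben} (in either case there is exactly one linear dependence among the $l+1$ images, so any $l$ of them are independent), but you should not expect the cancellation you asserted to hold in the $V_{\rm even}=0$ case; it needs the corrected sign and a correspondingly adjusted statement.
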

\begin{proof}
First, we see that there are $l!$ monomials in the expansion of $s_l(x_1\otimes x_2\otimes\cdots \otimes x_l)\otimes x_i$, and then $l(l+1)l!=l(l+1)!$ monomials in the expansion of the left hand side of (\ref{dependent}), and the involved monomials are of the form
\begin{equation*}
x_{i_1}\otimes \cdots\otimes x_k\otimes \cdots \otimes x_k\cdots \otimes x_{i_{l+1}},
\end{equation*}
where $\{i_1,\ldots, k, \ldots, i_{l+1}\}=\{1,2, \ldots, l\}$, i.e. each $x_i$ appears at least once. We also notice that there are exactly $\binom{l+1}{2}l!=\frac{l(l+1)!}{2}$ distinct such monomials. On the other hand, any such monomial can be written as two different ways:
\begin{eqnarray*}
&&x_{i_1}\otimes \cdots\otimes x_k\otimes \cdots \otimes x_k\cdots \otimes x_{i_{l+1}}\\
&=& \big((x_1\otimes \cdots\otimes x_l)\cdot \tau_1\otimes x_k\big)\sigma_s\\
{\rm or} &=& \big((x_1\otimes \cdots\otimes x_l)\cdot \tau_2\otimes x_k\big)\sigma_t,
\end{eqnarray*}
where $s$ and $t$ refer to the two positions of $x_k$ in the monomial.
Hence each monomial appears at least twice in the expansion of the left hand side of (\ref{dependent}), and then appears exactly twice. These monomials are clearly linearly independent, and then (\ref{dependent}) holds if and only if $c_{k,s}+(-1)^{s-t-1}c_{k,t}=0$ for all $k$, $s$ and $t$.
\end{proof}

\begin{proposition}\label{multiBeben}
If $1<l<p-1$, the multiplicity of $\Sigma^M X$ in $L_{l+1}(X)$ is at least $l$, i.e., there exists a map
\begin{equation*}
\bigvee_{l}\Sigma^M X\hookrightarrow L_{l+1}(X)
\end{equation*}
which admits a left homotopy inverse.
\end{proposition}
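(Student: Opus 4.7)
The plan is to leverage the $l+1$ self-maps $g_{\sigma_j}$ of $X^{\wedge(l+1)}$ introduced above. Each produces, by the same Beben--Wu argument that treats $g$, a telescope $T(g_{\sigma_j})\simeq \Sigma^M X$ sitting as a homotopy retract of $X^{\wedge(l+1)}$. Lemma~\ref{linear} then provides the crucial linear-algebraic input showing that any $l$ out of these $l+1$ copies are independent on mod-$p$ homology, which one assembles into the desired wedge summand of $L_{l+1}(X)$.

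The first step is to verify that each $g_{\sigma_j}$ is homotopy idempotent up to a unit. This is immediate: since $\tilde{H}_\ast(g)$ is such an idempotent (where the condition $l<p-1$ is used) and $\sigma_j$ is a self-homotopy-equivalence of $X^{\wedge(l+1)}$, the conjugate satisfies $g_{\sigma_j}^2=\sigma_j^{-1}g^2\sigma_j$, which inherits the idempotency up to the same unit. Hence each $T(g_{\sigma_j})$ is a homotopy retract of $X^{\wedge(l+1)}$, and inspecting the homology image identified in the text shows $T(g_{\sigma_j})\simeq\Sigma^M X$ for every $1\le j\le l+1$. Next, pick any $l$ of the $l+1$ telescopes, say $T(g_{\sigma_1}),\ldots,T(g_{\sigma_l})$, and consider the wedge map $\Phi:\bigvee_{j=1}^{l}T(g_{\sigma_j})\to X^{\wedge(l+1)}$ induced by the inclusions; by Lemma~\ref{linear} the subspaces $\mathrm{Im}\,\tilde{H}_\ast(g_{\sigma_j})\subset V^{\otimes(l+1)}$ are linearly independent, so $\Phi$ is injective on mod-$p$ homology.

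To land in $L_{l+1}(X)$, compose with the homotopy retraction $\pi:X^{\wedge(l+1)}\to L_{l+1}(X)$ coming from the idempotent $\tfrac{1}{l+1}\tilde{\beta}_{l+1}$, giving the candidate map $\pi\circ\Phi:\bigvee_{j=1}^{l}\Sigma^M X\to L_{l+1}(X)$. A left homotopy inverse for $\pi\circ\Phi$ is then built from the canonical telescope retractions $r_j:X^{\wedge(l+1)}\to T(g_{\sigma_j})$ that come packaged with each $g_{\sigma_j}$: after a Gram--Schmidt--type adjustment exploiting the independence supplied by Lemma~\ref{linear}, one produces $l$ mutually orthogonal homotopy idempotents in the mod-$p$ endomorphism algebra of $X^{\wedge(l+1)}$, whose corresponding telescopes collectively give the wedge summand. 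Because the resulting retraction factors through the projection onto the Lie part, it descends to a retraction $L_{l+1}(X)\to\bigvee_{j=1}^{l}\Sigma^M X$ as required.

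The main obstacle is exactly the orthogonalization step together with verifying that the wedge summand survives post-composition with $\pi$. This reduces to a linear-algebra calculation in $\mathbb{Z}/p[S_{l+1}]$ describing how $\beta_{l+1}$ interacts with the transpositions $\sigma_j=(j,l+1)$, and leans on two ingredients: Lemma~\ref{linear}, which identifies the single relation persisting among all $l+1$ images and thereby explains why the multiplicity bound is $l$ rather than $l+1$; and the unit-invertibility of $l+1$ modulo $p$, ensured by the hypothesis $l<p-1$, which makes $\tfrac{1}{l+1}\beta_{l+1}$ behave as a genuine idempotent on the relevant subspace.
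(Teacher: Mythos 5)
Your proposal correctly identifies the two ingredients the paper also uses --- the conjugate idempotents $g_{\sigma_j}$ and the linear-independence supplied by Lemma~\ref{linear} --- but it stops short of the actual proof at exactly the two places you yourself flag as ``the main obstacle.'' You write that ``after a Gram--Schmidt--type adjustment\ldots one produces $l$ mutually orthogonal homotopy idempotents,'' and that the resulting retraction ``descends'' to $L_{l+1}(X)$, but neither assertion is substantiated. Orthogonalizing homotopy idempotents is not a formality: the $g_{\sigma_j}$ need not commute, and mere injectivity of $\tilde{H}_\ast(\Phi)$ does not by itself produce a left homotopy inverse for $\Phi$. What the paper does instead is an explicit induction on the number $k$ of copies already split off. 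Using the cofibration $\bigvee_{j\le k}\Sigma^M X \hookrightarrow X^{\wedge(l+1)} \to Y_k$ together with a section $s_k$ of $p_k$, it shows that because $\mathrm{Im}\,i_{k+1\ast}\cap\bigoplus_{j\le k}\mathrm{Im}\,i_{j\ast}=0$ (which is exactly what Lemma~\ref{linear} provides), the composite $s_k\circ p_k\circ i_{k+1}$ is still a mod-$p$ homology injection, and a homology-isomorphism argument then splits off the $(k+1)$st copy. That inductive construction is the content of the ``Gram--Schmidt step'' you allude to, and it is missing from your write-up.

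The second gap is the descent to $L_{l+1}(X)$. Post-composing $\Phi$ with the retraction $\pi$ onto the Lie part can in principle kill part of the image in homology; the image of $\tilde{H}_\ast(g_{\sigma_j})$ is $\mathrm{Im}(s_l\otimes\mathrm{id})\cdot\sigma_j$, and it is not a priori clear that this lands in $L_{l+1}(V)$ or survives the idempotent $\tfrac{1}{l+1}\beta_{l+1}$. The paper does not try to argue that the retraction abstractly ``factors through the projection onto the Lie part''; instead it quotes the proof of Proposition~4.2 in \cite{Beben} to know that each individual copy $T(g_{\sigma_j})\simeq\Sigma^M X$ is already a homotopy retract of $L_{l+1}(X)$, and then verifies directly that the four-fold composite
\begin{equation*}
\bigvee_{l}\Sigma^M X \to X^{\wedge(l+1)} \to L_{l+1}(X) \to X^{\wedge(l+1)} \to \bigvee_{l}\Sigma^M X
\end{equation*}
induces an isomorphism on $\mathbb{Z}/p$-homology. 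Without something equivalent to this check, your argument for the final sentence (``it descends to a retraction $L_{l+1}(X)\to\bigvee_{l}\Sigma^M X$'') is not complete. In short, your outline is pointed in the right direction, but the two steps you defer are precisely where the content of the proposition lives, and they need the inductive cofiber argument and the appeal to Beben--Wu's Proposition~4.2 (or some replacement) to be carried out.
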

\begin{proof}
We may prove the proposition by induction.
Let us consider the $l$ morphisms $g_{\sigma_j}$ with $1\leq j\leq l$. By Lemma \ref{linear}, we see that the sum of the homology images ${\rm Im}~\tilde{H}_\ast(g_{\sigma_j})\in V^{\otimes (l+1)}$ ($1\leq j\leq l$) is a direct sum ($c_{i,l+1}=0$ imples $c_{i,j}=0$). Suppose there exists at least $k$ copies of $\Sigma^MX$ in the homotopy decomposition of $X^{\wedge{l+1}}$, and further they are determined by $g_{\sigma_j}$ for $1\leq j\leq k$ respectively. Then we have the diagram of maps
\begin{equation*}
  \begin{tikzcd}
  &&\Sigma^MX\ar{d}{p_k\circ i_{k+1}}\ar[hook']{ld}[swap]{i_{k+1}}\\
  \bigvee_{j=1}^k\Sigma^MX \ar[hook]{r}{\iota_k:=\vee_j i_j} & X^{\wedge(l+1)}\ar{r}{p_k}&X^{\wedge(l+1)}/\bigvee_{j=1}^k\Sigma^MX=: Y_k  \ar{d}{p_{k,k+1}} \ar[bend left, dashed]{l}{s_{k}}
  \\
  &&Y_{k+1},\\
  \end{tikzcd}
\end{equation*}
 where $i_j: \Sigma^MX \simeq T(g_{\sigma_j})\rightarrow X^{\wedge(l+1)}$, $p_k\circ s_k\simeq {\rm id}$, and the row and the column are cofibre sequences. Consider the composition map $i_{k+1}^\prime:=s_k\circ p_k\circ i_{k+1}$. Since we know
 \begin{equation*}
 {\rm Im}~i_{k+1\ast} \cap \bigoplus_{j=1}^{k}~{\rm Im}~i_{j\ast}=0,
 \end{equation*}
then $p_{k\ast}\circ i_{k+1\ast}$ is injective which implies $i_{k+1\ast}^\prime=s_{k\ast}\circ p_{k\ast}\circ i_{k+1\ast}$ is injective.
Hence the composition map
\begin{equation*}
\Sigma^MX\vee X^{\wedge(l+1)}/\Sigma^MX\stackrel{ i_{k+1}^\prime \vee j_{k+1}}{\hookrightarrow} X^{\wedge(l+1)}\vee X^{\wedge(l+1)} \stackrel{\nabla}{\rightarrow}X^{\wedge(l+1)},
\end{equation*}
induces the isomorphism of $\mathbb{Z}/p$-homology, and then is a homotopy equivalence. Now there exists some map
\begin{equation*}
t_{k+1}^\prime: X^{\wedge(l+1)}\rightarrow \Sigma^MX
\end{equation*}
such that $t_{k+1}^\prime\circ  i_{k+1}^\prime\simeq ~{\rm id}$, i.e. $(t_{k+1}^\prime\circ s_{k})\circ( p_{k}\circ i_{k+1})\simeq {\rm id}$. It follows that the composition map
\begin{equation*}
Y_k\stackrel{{\rm comult}}{\longrightarrow} Y_k\vee Y_k\stackrel{t_{k+1}^\prime\circ s_{k}\vee p_{k,k+1}}{\longrightarrow} \Sigma^MX\vee Y_{k+1}
\end{equation*}
is a homotopy equivalence. Hence
\begin{equation*}
\Sigma^MX\vee\bigvee_{j=1}^k\Sigma^MX \hookrightarrow  Y_k\vee\bigvee_{j=1}^k\Sigma^MX\stackrel{\simeq}{\longrightarrow}  X^{\wedge(l+1)}
\end{equation*}
is a homotopy embedding which closes our inductive step, and we have proved that there are at least $l$ copies of $\Sigma^MX$ in the homotopy decomposition of $X^{\wedge(l+1)}$. By the proof of Proposition $4.2$ in \cite{Beben}, each $\Sigma^MX$ is indeed a homotopy retract of $L_{l+1}(X)$. Then the composition map
\begin{equation*}
\bigvee_{l}\Sigma^MX\rightarrow X^{\wedge(l+1)}\rightarrow L_{l+1}(X) \rightarrow X^{\wedge(l+1)} \rightarrow \bigvee_{l}\Sigma^MX
\end{equation*}
induces the isomorphism of $\mathbb{Z}/p$-homology, and hence $l$ copies of $\Sigma^MX$ indeed lie in $L_{l+1}(X)$.
\end{proof}

From this proposition, if we also know some suitable knowledge of the homotopy groups of the related spaces as that in Theorem \ref{hyper2} and others, then there is a good chance to prove that our space $X$ is locally hyperbolic.

\section{Appendix: The homotopy groups have at most exponential growth}
\noindent The upper bound of the size of the homotopy groups was determined by Henn \cite{Henn} using the unstable Adams spectral sequence (or Bousfield-Kan spectral sequence) where he considered the radius of convergence of the generating function of homotopy groups, and used a cohomological spectral sequence of Miller to study the $E_2$-page of BKSS. Here we reprove the result directly by using the lambda algebra description of the $E_1$-page. The treatment here also serves as a comparison to that in the proof of our previous results.
\begin{definition}\label{rlcs}
Given any group $G$ and a fixed prime number $p$, the \textit{lower $p$-central series} of $G$ is the filtration
\begin{equation*}
G=\Gamma_1 G\supseteq \Gamma_2 G\supseteq\ldots \supseteq\Gamma_s G\supseteq\ldots,
\end{equation*}
where
\begin{equation*}
\Gamma_s G=\{[x_1,x_2\ldots, x_l]^{p^t}~|~lp^t\geq s, x_i\in G\},
\end{equation*}
and $[x_1,x_2\ldots, x_l]=[[\ldots[x_1,x_2]\ldots], x_l]$ denotes the iterated commutator.
\end{definition}
If $G$ is a free group, then there is an isomorphism of restricted Lie algebras
\begin{equation*}
\bigoplus_s\Gamma_s G/\Gamma_{s+1} G \cong L(G/\Gamma_2 G)
\end{equation*}
between the direct sum of the quotients of subsequent terms in the lower $p$-central series and the free restricted Lie algebra over the abelianization of $G$, where for the restricted Lie structure of the direct sum, the bracket is the commutator operation and the $p$-th power morphism is derived from the power morphism in the group $G$.

Suppose $K$ is a simply connected simplicial set, we can form the Kan construction $GK$ such that $|GK|\simeq \Omega |K|$.
Then apply Definition \ref{rlcs} for each dimension of $GK$, we get a filtration of simplicial groups
\begin{equation*}
GK=\Gamma_1 GK\supseteq \Gamma_2 GK\supseteq\ldots \supseteq\Gamma_s GK\supseteq\ldots.
\end{equation*}
From its associated exact couple, we get a spectral sequence $\{E^{i}(K)\}_i$ which serves as the unstable version of Adams spectral sequence.
\begin{theorem}\cite{Rector}\label{SS}
Suppose $K$ is a simply connected simplicial set, then $\{E^{i}(K)\}_i$ converges to $E^\infty(K)$, and there are short exact sequences
\begin{equation*}
0\rightarrow F_{s+1}\pi_q(GK)\rightarrow F_{s}\pi_q(GK)\rightarrow E^\infty_{s, q}(K)\rightarrow 0,
\end{equation*}
where $\{F_{s}\pi_q(GK)={\rm Im}\big(^{(p)}\pi_q(\Gamma_s(GK))\rightarrow  ^{(p)}\pi_q(GK)\big)\}_s$ is the filtration of $\pi_q(GK)\cong \pi_{q+1}(K)$ modulo the subgroup of elements of finite order prime to $p$. Further, the $E_1$-term of the spectral sequence has the form
\begin{equation*}
E^1_{s,q}(K)=\pi_q(\Gamma_s GK/\Gamma_{s+1} GK) \cong\pi_q(L_s(GK/\Gamma_{2} GK)).
\end{equation*} \hfill $\Box$
\end{theorem}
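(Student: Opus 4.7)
The plan is to build the spectral sequence from the lower $p$-central series filtration of $GK$ via a standard exact-couple construction, to identify the $E^1$-term through the free restricted Lie algebra model of the associated graded, and then to verify convergence after $p$-localization.

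Since the Kan loop group $GK$ is dimensionwise a free group, the isomorphism $\bigoplus_s \Gamma_s G/\Gamma_{s+1}G \cong L(G/\Gamma_2 G)$ recalled just before the statement applies simplex-by-simplex, and by functoriality of the lower $p$-central series it assembles into an isomorphism of simplicial abelian groups $\Gamma_s GK/\Gamma_{s+1}GK \cong L_s(GK/\Gamma_2 GK)$. Taking homotopy groups then gives $E^1_{s,q}(K) = \pi_q(\Gamma_s GK/\Gamma_{s+1}GK) \cong \pi_q(L_s(GK/\Gamma_2 GK))$, as required. Note that simply-connectedness of $K$ ensures $GK$ is connected, so these homotopy groups are well-behaved.

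Next, from the short exact sequences of simplicial groups $1 \to \Gamma_{s+1}GK \to \Gamma_s GK \to \Gamma_s GK/\Gamma_{s+1}GK \to 1$ I would pass to long exact sequences in homotopy. Wiring these together for all $s$ yields an exact couple whose derived spectral sequence is precisely $\{E^i(K)\}_i$; the filtration on $\pi_q(GK) \cong \pi_{q+1}(K)$ induced by this couple is $F_s\pi_q(GK) = {\rm Im}(\pi_q(\Gamma_s GK) \to \pi_q(GK))$, and once convergence is known the short exact sequences $0 \to F_{s+1}\pi_q \to F_s\pi_q \to E^\infty_{s,q} \to 0$ fall out formally from the identification of $E^\infty$ with the associated graded of the filtration.

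The main obstacle is convergence. One must show that after restricting to $p$-local homotopy (that is, after quotienting out prime-to-$p$ torsion) the filtration $\{F_s\pi_q\}_s$ is Hausdorff, so that $\pi_q(GK)$ modulo prime-to-$p$ torsion is genuinely reconstructed from the $E^\infty$-page. The key input is the residual $p$-nilpotence of the lower $p$-central series of a free group, combined with a Bousfield--Kan-style analysis of the tower $\{GK/\Gamma_s GK\}_s$: its homotopy inverse limit, after $p$-completion, recovers $p$-local homotopy away from prime-to-$p$ torsion. This last identification, which binds the group-theoretic filtration to the homotopical $p$-localization, is where I expect the most technical work; the $E^1$-identification is by comparison a purely algebraic consequence of the free restricted Lie algebra formula.
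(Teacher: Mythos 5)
The paper does not prove this theorem: it is stated with a citation to Rector's 1966 paper, so there is no in-paper argument to compare against. Your sketch is a faithful reconstruction of the standard route. The $E^1$-identification from the dimensionwise free-group restricted Lie algebra isomorphism, the exact couple built from the short exact sequences $1\to\Gamma_{s+1}GK\to\Gamma_s GK\to\Gamma_s GK/\Gamma_{s+1}GK\to 1$ of simplicial groups, and the resulting filtration on $\pi_q(GK)\cong\pi_{q+1}(K)$ are all exactly as in the source. You also correctly locate the crux at convergence --- showing the filtration becomes Hausdorff once one passes to $^{(p)}\pi_q$, i.e.\ after killing torsion prime to $p$. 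The only thing worth flagging is a mild anachronism: Rector's 1966 paper predates Bousfield--Kan, and the original convergence argument rests on the Curtis--Kan analysis of the lower central series spectral sequence, handled via the residual $p$-nilpotence of free groups and a direct group-theoretic study of $\bigcap_s F_s\pi_q(GK)$ rather than via $p$-completed homotopy inverse limits of towers. The mathematical content is equivalent, and nothing in your plan would fail; it is just worth recording that the cited source packages the convergence step differently than a post-1972 treatment would.
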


The $E_1$-term of the spectral sequence can be described in terms of the so-called lambda algebra, the ${\rm mod}~2$ version of which is defined as follows:
\begin{definition}\cite{6A,BC}
For $p=2$, \textit{The lambda algebra $\Lambda$} is the graded associative differential algebra with unit over $\mathbb{Z}/2$ with a generator $\lambda_i$ of degree $i$ for each $i\geq 0$ and subject to the relations
\begin{equation*}
\lambda_i\lambda_{2i+1+k}=\sum\limits_{j\geq 0}\binom{k-1-j}{j}\lambda_{i+k-j}\lambda_{2i+1+j},
\end{equation*}
while the differential $\partial$ is given by
\begin{equation*}
\partial (\lambda_i)=\sum\limits_{j\geq 1}\binom{i-j}{j}\lambda_{i-j}\lambda_{j-1}.
\end{equation*}
\end{definition}

In \cite{6A}, it was proved that $\Lambda$ is isomorphic as differential algebra to $E^1S$ which is the $E^1$-term of the associated spectral sequence of the lower $2$-central series of $FS$, the free group spectrum of sphere spectrum (Section $4$ of \cite{KW65}). The additive structure of $\Lambda$ is similar to that of Steenrod algebra. We may denote $\lambda_I=\lambda_{i_1}\lambda_{i_2}\ldots\lambda_{i_s}$ with $I=(i_1,i_2\ldots, i_s)$ be any finite sequence of non-negative integers, and call $\lambda_I$ \textit{allowable} (or \textit{admissible}) if $2i_j\geq i_{j+1}$ for each $0<j<s$ or $I=\emptyset$. Then $\Lambda$ has a $\mathbb{Z}/2$-basis consisting of all the allowable monomials. Further, denote by $\Lambda(n)$ the additive subgroups of $\Lambda$ generated by the allowable monomials with some initial term $\lambda_i$ such that $i<n$. Then $\Lambda(n)$ is a differential sub-algebra of $\Lambda$ and in fact $\Lambda(n)\cong E^1(S^n)$.

\begin{theorem}\label{upper}
Suppose $K$ is a simply connected simplicial set such that $\tilde{H}_\ast(K;\mathbb{Z}/p)$ is finite dimensional. Then
the $p$-primary torsion part of $\pi_\ast(K)$ has at most exponential growth, i.e.,
\begin{equation*}
\limsup_n\frac{{\rm ln}~ T_n}{n}<\infty,
\end{equation*}
where $T_n$ is as in Definition \ref{hyperpdef}.
\end{theorem}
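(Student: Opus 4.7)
My plan is to bound $T_n$ using the unstable Adams spectral sequence of Theorem \ref{SS}. First, observe that each $\mathbb{Z}/p^r$-summand in $\pi_q(GK)\cong\pi_{q+1}(K)$ produces, via the filtration $\{F_s\pi_q(GK)\}$, an associated graded of total $\mathbb{Z}/p$-dimension exactly $r\geq 1$ inside $\bigoplus_s E^\infty_{s,q}(K)$. Hence each such summand contributes at least one unit to $\sum_{s\geq 1}\dim_{\mathbb{Z}/p}E^\infty_{s,q}(K)$, so
\[
T_n \;\leq\; \sum_{q\leq n-1}\sum_{s\geq 1}\dim_{\mathbb{Z}/p}E^\infty_{s,q}(K)\;\leq\;\sum_{q\leq n-1}\sum_{s\geq 1}\dim_{\mathbb{Z}/p}E^1_{s,q}(K),
\]
and the task reduces to bounding the total $E^1$-dimension in the strip $q\leq n-1$ by an exponential in $n$.

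The second step is to control $E^1$ via a free restricted Lie algebra estimate. By Theorem \ref{SS}, $E^1_{s,q}(K)\cong\pi_q(L_s(GK/\Gamma_2 GK))$, the weight-$s$ component of the free restricted Lie functor applied to the abelianized Kan loop group. Since $\tilde{H}_\ast(K;\mathbb{Z}/p)$ is finite-dimensional, the underlying normalized $\mathbb{Z}/p$-chain complex of $GK/\Gamma_2 GK$ is (up to quasi-isomorphism) concentrated in finitely many degrees with finite total $\mathbb{Z}/p$-dimension, say $d$. Because the free restricted Lie functor is naturally a subfunctor of the tensor-power functor $T_s$ (augmented by the unary $p$-th power operation, whose contribution is at most linear in the weight), one obtains the termwise bound
\[
\sum_{q\leq n-1}\dim_{\mathbb{Z}/p}E^1_{s,q}(K)\;\leq\;d^s\cdot P(s)
\]
for some polynomial $P$. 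Since each Lie element has weight $s$ bounded by its degree, summing over $s\geq 1$ with $s\leq n$ yields a total bounded by $(d+1)^n$ up to a polynomial factor.

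For the sphere case $K=S^m$ this abstract count is recovered concretely: $E^1(S^m)\cong\Lambda(m)$ is spanned by admissible monomials of $\Lambda$ with leading index less than $m$, and a standard combinatorial count of admissible monomials of length $s$ and total degree $q$ confirms at most exponential growth. The main obstacle in the general case will be making precise the bound on $\pi_q(L_s(GK/\Gamma_2 GK))$ in terms of the free Lie functor on a finite-dimensional graded vector space; this requires the Dold--Kan correspondence to translate simplicial abelian group homotopy into chain-level data, together with the embedding of the restricted free Lie functor into the tensor functor via the Dynkin--Specht--Wever projector appearing in the previous sections. Once these identifications are in place, combining with Step 1 yields $\limsup_n(\ln T_n)/n\leq\ln(d+1)<\infty$, as claimed.
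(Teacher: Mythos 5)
Your overall plan --- bound $T_n$ via the unstable Adams spectral sequence, identify $E^1$ with $L(s^{-1}\tilde H_\ast K)\,\hat\otimes\,\Lambda$, and count --- is the same route the paper takes, but there is a genuine gap in your first step, and it propagates into your second. The chain $T_n\leq\sum_{q\leq n-1}\sum_{s\geq 1}\dim E^\infty_{s,q}(K)\leq\sum_{q\leq n-1}\sum_{s\geq 1}\dim E^1_{s,q}(K)$ is vacuous because the right-hand side is in general infinite: for a fixed $q$, the filtration index $s$ is \emph{not} bounded by $q$. The culprit is exactly $\lambda_0$. Under the Bousfield--Curtis isomorphism, an element $\tilde x\in L_l(\pi(V))$ of homotopy degree $q$ gives elements $\tilde x\otimes\lambda_0^j$ of the same homotopy degree $q$ but filtration $p^jl$, corresponding to the restricted $p$-th powers $\tilde x^{p^j}\in\pi_q(L_{p^jl}(V))$. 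Already for $K=S^2$ one has $E^1_{2^j,1}\neq 0$ for all $j\geq 0$, so $\sum_s\dim E^1_{s,1}=\infty$; the full tower survives to $E^\infty$ since it detects the infinite-order class $\iota_2$. Thus your remark that ``each Lie element has weight $s$ bounded by its degree'' is true of the factor $L(\pi(V))$ but false of $\pi_\ast(L(V))=L(\pi(V))\,\hat\otimes\,\Lambda$, which is what $E^1$ actually is, and the asserted truncation $s\leq n$ is unjustified.

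What the paper does, and what your argument is missing, is an analysis of the $\lambda_0$-towers that permits you to discard them before counting. Specifically, the paper shows that a $\mathbb{Z}/p^r$-summand of $\pi_q(GK)$ is represented by a single permanent cycle $\tilde x$ \emph{not} ending in $\lambda_0$, with the remaining $r-1$ pieces of its associated graded being $\tilde x\otimes\lambda_0^i$, $1\leq i\leq r-1$; conversely, any permanent cycle of the form $\tilde y\otimes\lambda_0$ arises as the $p$-th power of a lower class. Hence $T_n$ is bounded not by $\dim(L\,\hat\otimes\,\Lambda)_{\leq n-1}$ (which is infinite) but by $\dim(L\,\hat\otimes\,\tilde\Lambda)_{\leq n-1}$, where $\tilde\Lambda$ consists of admissible monomials whose last index is positive. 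Only after this reduction does one get a finite count, which is then bounded as you intended: $\dim L_{\leq q}\leq\dim T_{\leq q}\leq C_1 a^q$ on the Lie side and $\dim\tilde\Lambda_{\leq q}\leq C_2 b^q$ on the $\Lambda$ side. If you insert this $\lambda_0$-exclusion step, your argument essentially coincides with the paper's.
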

\begin{proof} First consider the case when $p=2$. In \cite{BC} and \cite{Curtis}, Bousfield and Curtis have proved that there exists a natural isomorphism
\begin{equation*}
\phi: L(\pi(V))\hat\otimes \Lambda \rightarrow \pi_\ast L(V)
\end{equation*}
for any simplicial vector space $V$ over $\mathbb{Z}/2$. Here $\hat\otimes$ is defined by
\begin{equation*}
M\hat\otimes \Lambda =\bigoplus_n M_n\otimes \Lambda(n)
\end{equation*}
for any graded vector space $M$. Hence $E^1$-term becomes
\begin{equation*}
E^1(K)\cong L(\pi_\ast(GK/\Gamma_2 GK))~\hat\otimes ~\Lambda\cong L(s^{-1}\tilde{H}_\ast K)~\hat\otimes~ \Lambda,
\end{equation*}
under which
\begin{equation*}
E^1_{2^sl,q}(K)\hookleftarrow\big(L_l(s^{-1}\tilde{H}_\ast K)~\hat\otimes~ \Lambda^s\big)_q,
\end{equation*}
and $\Lambda^s$ is the graded sub-vector space of $\Lambda$ generated by monomials of length $s$.

We will prove the theorem by estimating the size of the $E^1$-term, before which we have to deal with the terms involving $\lambda_0$.
By checking the definition of $\phi$ \cite{Curtis}, we see that for any $\tilde{x}\in \big(L_l(\pi_\ast(GK/\Gamma_2 GK))\big)_q$,
\begin{equation*}
\phi(\tilde{x}\otimes  \lambda_0)=\tilde{x}^2\in \pi_q(L_{2l}(GK/\Gamma_2 GK)).
\end{equation*}
Now if $x\in \pi_q(GK)$ is of order $2^r$, then there exists $l$ such that $x\in \Gamma_l(GK)$ and $x\not\in\Gamma_{l+1}(GK)$, and its image $\tilde{x}\in \pi_q\big(L_l(GK/\Gamma_2 GK)\big)\cong E^1_{l,q}(K)$ is a permanent cycle. Further, for each $1\leq i\leq r-1$, $x^{2^i}$ is presented by the permanent cycle $\tilde{x}^{2^i}\in E^1_{2^il,q}(K)$ which corresponds to $\tilde{x}\otimes \lambda_0^i$ under $\phi$. Their images $\tilde{x}^{2^i}_{\infty}$ ($0\leq i\leq r-1$) in the infinity page determine the element $x\in \pi_q(GK)$ through the filtration. On the other hand, if $\tilde{x}\otimes  \lambda_0$ is a permanent cycle in $E^1$-term, then it corresponds to some $x^2\in \pi_q(GK)$ which implies $x$ is also a Moore cycle. Hence in order to give an upper bound of the number of $2$-torsion summands in $\pi_\ast(GK)$, it suffices to give an upper bound of the dimension of $E_1$-term by omitting the elements of the form $\tilde{x}\otimes  \lambda_I\lambda_0$.

Now let us do the estimation. For the restricted Lie algebra $L(s^{-1}\tilde{H}_\ast K)$ which is contained in its universal enveloping algebra $T(s^{-1}\tilde{H}_\ast K)$, we have
\begin{equation*}
{\rm dim}~(L_{\leq l}(s^{-1}\tilde{H}_\ast K))\leq {\rm dim}~(T_{\leq l}(s^{-1}\tilde{H}_\ast K))\leq C_1 a^l,
\end{equation*}
where $a={\rm dim}~(\tilde{H}_\ast K)$ and $C_1$ is a positive constant.

For the lambda algebra, we may denote $\tilde{\Lambda}$ to be the graded sub-vector space of $\Lambda$ consisting of the allowable monomials ending with some $\lambda_i$ ($i>0$). Then we have
\begin{equation*}
{\rm dim}~(\tilde{\Lambda}_{\leq q})\leq \binom{2q}{q-1}\leq C_2 b^q
\end{equation*}
for some positive constant $C_2$ and $b$, where the first inequality can be deduced from that the dimension of $\tilde{\Lambda}_{\leq q}$ is smaller than the number of the non-negative integer solutions of the equation $x_1+x_2+\ldots+x_q+x_{q+1}=q$, and the second inequality can be obtained from the Stirling formula $n!\sim \sqrt{2\pi n}(\frac{n}{e})^n$.

Combining the previous arguments together, we have
\begin{equation*}
T_{q+1}\leq C_1 a^q \times C_2 b^q=C(ab)^q,
\end{equation*}
for $K$ is simply connected. Accordingly,
\begin{equation*}
\limsup_q\frac{{\rm ln}~ T_{q+1}}{q+1}\leq {\rm ln}~(ab)<\infty,
\end{equation*}
which prove the theorem for $p=2$.

Though a little more complicated, the similar argument can be applied to prove the theorem for any odd prime $p$.
\end{proof}

With the help of the stable version of the spectral sequence which from $E^2$-term on coincides with the Adams spectral sequence \cite{6A}, a similar argument will show the following theorem which should be known to experts:
\begin{theorem}\label{upperstable}
Given a simply connected finite spectrum $X$, the $p$-primary torsion of $\pi_\ast(X)$ has at most exponential growth, i.e.,
\begin{equation*}
\limsup_n\frac{{\rm ln}~ T_n}{n}<\infty. 
\end{equation*} \hfill $\Box$
\end{theorem}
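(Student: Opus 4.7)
The plan is to mimic the proof of Theorem \ref{upper}, replacing the unstable Bousfield--Kan spectral sequence by its stable analogue. Because $X$ is a finite spectrum, $\tilde{H}_\ast(X;\mathbb{Z}/p)$ is finite dimensional; write $a$ for its total dimension. The spectral sequence built from the lower $p$-central series of the free group spectrum $FX$ agrees with the Adams spectral sequence from $E^2$ onward, converges to the $p$-primary part of $\pi_\ast(X)$ modulo elements of finite order prime to $p$, and by the same filtration argument used in Theorem \ref{SS} a $\mathbb{Z}/p^r$-summand in $\pi_q(X)$ is detected by $r$ nontrivial permanent cycles in the associated graded.

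First I would identify the $E^1$-term. Stably, the free (restricted) Lie algebra on $s^{-1}\tilde{H}_\ast X$ collapses to its linear part, so for $p=2$ one obtains
\begin{equation*}
E^1_{s,q}(X)\cong \big(s^{-1}\tilde{H}_\ast X \,\hat{\otimes}\,\Lambda^s\big)_q,
\end{equation*}
with an analogous description for odd $p$ in terms of the odd-primary lambda algebra generated by the classes $\lambda_i$ and $\mu_i$. This is just the stable, $\tilde{H}_\ast X$-coefficient version of the Bousfield--Curtis isomorphism recalled in the proof of Theorem \ref{upper}.

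Next I would import the $\lambda_0$ accounting trick from the proof of Theorem \ref{upper} to handle torsion multiplicities: right multiplication by $\lambda_0$ (respectively its odd-primary analogue) on a permanent cycle represents the $p$-th power in the group, so the entire tower of $p$-multiples of a fixed $\mathbb{Z}/p^r$-generator is represented by monomials of the form $\tilde{x}\otimes \lambda_I\lambda_0^i$ for a single admissible $\lambda_I$ not ending in $\lambda_0$. Hence $T_{q+1}$ is bounded by the number of pairs $(\tilde{x},\lambda_I)$ with $\tilde{x}$ a $\mathbb{Z}/p$-basis element of $\tilde{H}_\ast X$, $\lambda_I$ an admissible monomial in $\tilde{\Lambda}$, and total internal degree at most $q$.

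Finally I would combine the size bounds. Finite dimensionality of $\tilde{H}_\ast X$ gives a uniform factor $a$, and the same Stirling estimate as in Theorem \ref{upper} yields
\begin{equation*}
\dim \tilde{\Lambda}_{\leq q}\leq \binom{2q}{q-1}\leq C\, b^q
\end{equation*}
for some constants $C,b>0$, so $T_{q+1}\leq aC b^q$ and therefore $\limsup_q (\ln T_{q+1})/(q+1)\leq \ln b<\infty$. The main obstacle, as in the unstable case, is the bookkeeping for torsion: one must check that each $\mathbb{Z}/p^r$-summand is counted at most once, rather than $r$ times, by the chosen family of admissible monomials. Carrying this out at odd primes, where the $\mu_i$-generators and the lambda-algebra relations interact nontrivially with the Bockstein, is the only place where the argument needs more than a cosmetic modification of the proof of Theorem \ref{upper}.
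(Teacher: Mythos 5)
Your proposal is correct and follows exactly the route the paper indicates: the paper itself gives essentially no proof of Theorem \ref{upperstable} beyond the remark that the stable version of the lower $p$-central series spectral sequence (which agrees with the Adams spectral sequence from $E^2$ onward, per \cite{6A}) admits ``a similar argument'' to the proof of Theorem \ref{upper}. Your identification of the stable $E^1$-term as $\tilde{H}_\ast X$ tensored with the lambda algebra (the Lie-algebra weight filtration contributing only in weight one), together with the $\lambda_0$ bookkeeping and the Stirling bound on $\tilde{\Lambda}_{\le q}$, is precisely the intended expansion of that remark; the stable case is in fact simpler than the unstable one because the factor $\dim L_{\le l}(s^{-1}\tilde H_\ast K)$ is replaced by the single constant $a=\dim\tilde H_\ast X$.
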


\section*{Acknowledgements}
The authors would like to thank the anonymous referee most warmly for his/her valuable suggestions and comments on an earlier version of this paper, especially about the writing and presentation of our article which have essentially improved the exposition of this paper.


\begin{thebibliography}{10}

\bibitem{Anick} D. J. Anick, {\em Homotopy exponents for spaces of category two}, Algebraic Topology (Arcata, CA, 1986), Lecture Notes in Math. \textbf{1370} (Springer, Berlin, 1989), pp. 24-52.

\bibitem{Beben} P. Beben and J. Wu, {\em Modular representations and the homotopy of low rank $p$-local CW-complexes}, Math. Z. \textbf{273} (2013), 735-751.

\bibitem{BC} A. K. Bousfield and E. B. Curtis, {\em A spectral sequence for the homotopy of nice spaces}, Tran. Amer. Math. Soc. \textbf{151} (1970), 457-479.

\bibitem{6A} A. K. Bousfield, E. B. Curtis, D. M. Kan, D. G. Quillen, D. L. Rector and J. W. Schlesinger, {\em The ${\rm mod}~p$ lower central series and the Adams spectral sequence}, Topology \textbf{5} (1966), 331-342.

\bibitem{Chen} W. Chen and J. Wu, {\em Decomposition of loop spaces and periodic problem on $\pi_\ast$}, Algebr. Geom. Topl. \textbf{13} (2013), 3245-3260.

\bibitem{Cohen86} F. R. Cohen, {\em A course in some aspects of classic homotopy theory}, in: Lecture Notes in Math., \textbf{1286} (1986), Springer, Berlin, 1-92.

\bibitem{Cohen89} F. R. Cohen, {\em Applications of loop spaces to some problems in topology}, in: London Math. Soc. Lecture Notes, \textbf{139}, (1989), 10-20.

\bibitem{Cohen} F. R. Cohen, J. C. Moore and J. A. Neisendorfer, {\em Torsion in homotopy groups}, Ann. Math. \textbf{109} (1979), 121-168.

\bibitem{Cohen79} F. R. Cohen, J. C. Moore and J. A. Neisendorfer, {\em The double suspension and exponents of the homotopy group of spheres}, Ann. Math. \textbf{109} (1979), 549-565.

\bibitem{Cohen1} F. R. Cohen, J. C. Moore and J. A. Neisendorfer, {\em Exponent in homotopy theory}, from: Algebraic topology and algebraic $K$-theory (Princeton N. J., 1983), (W. Browder, editor), Ann. Math. Stud. \textbf{113}, Princeton Univ. Press (1987) 3-34.

\bibitem{Cohen95} F. R. Cohen and J. Wu, {\em A remark on the homotopy groups of $\Sigma^n\mathbb{R}P^2$}, from: The \v{C}ech centennial, (M. Cenkl, H. Miller, editors), Contemp. Math. \textbf{181}, Amer. Math. Soc. (1995) 65šC81.

\bibitem{Curtis} E. B. Curtis, {\em Simplicial homotopy theory}, Advances in Math. \textbf{6} (1971), 107-209.

\bibitem{GTM205} Y. F\'{e}lix, S. Halperin and J.- C. Thomas, {\em Rational homotopy theory}, Graduate Texts
in Mathematics, Vol. \textbf{205}, Springer-Verlag, New York, 2001.

\bibitem{Felix} Y. F\'{e}lix, S. Halperin and J.- C. Thomas, {\em Exponential growth and an asymptotic formula for the ranks of homotopy groups of a finite $1$-connected complex}, Ann. Math. \textbf{170} (2009), 443-464.

\bibitem{Henn} H. -W. Henn, {\em On the growth of homotopy groups}, Manuscripta Math. \textbf{56} (1986), 235-245.

\bibitem{Hill} M. A. Hill, M. J. Hopkins and D. C. Ravenel, {\em On the nonexistence of elements of Kervaire invariant one}, Ann. Math. \textbf{184} (2016), 1-262.

\bibitem{KW65} D. M. Kan and G. W. Whitehead, {\em The reduced join of two spectra}, Topology \textbf{3} (1965), 239-261.

\bibitem{MN} C. A. McGibbon and J. A. Neisendorfer, {\em On the homotopy groups of a finite dimensional space}, Comm. Math. Helv. \textbf{59} (1984), 253-257.

\bibitem{MW} C.A. McGibbon and C.W. Wilkerson, {\em Loop spaces of finite complexes at large primes}, Proc. Amer. Math. Soc. \textbf{96} No. 4 (1986), 698-702.

\bibitem{Neisen3} J. A. Neisendorfer, {\em $3$-primary exponents}, Math. Proc. Camb. Phil. Soc. \textbf{90} (1981), 63-83.

\bibitem{Neisen83} J. A. Neisendorfer, {\em The exponent of a Moore space}, from: Algebraic topology and algebraic $K$-theory (Princeton N. J., 1983), (W. Browder, editor), Ann. Math. Stud. \textbf{113}, Princeton Univ. Press (1987) 35-71.

\bibitem{Neisen} J. A. Neisendorfer, {\em Algebraic methods in unstable homotopy theory}, Cambridge Univ. Press (2009).

\bibitem{Neisen2013} J. A. Neisendorfer, {\em Samelson products and exponents of homotopy groups}, J. Homo. Rela. Stru., \textbf{8} (2) (2013), 239-277.

\bibitem{Rector} D. L. Rector, {\em An unstable Adams spectral sequence}, Topology \textbf{5} (1966), 343-346.

\bibitem{Selick83} P. S. Selick, {\em On conjectures of Moore and Serre in the case of torsion-free suspensions}, Math. Proc. Camb. Phil. Soc. \textbf{94} (1983), 53-60.

\bibitem{Selick} P. S. Selick and J. Wu, {\em On natural coalgebra decompositions of tensor algebras and loop susupensions}, Memoirs AMS \textbf{148} (2000), No. 701.

\bibitem{Selick01} P. S. Selick and J. Wu, {\em On functorial decompositions of self-smash products}, Manuscripta Mathematica \textbf{111} (4) (2003), 435-457.

\bibitem{Selick1} P. S. Selick and J. Wu, {\em The functor $A^{{\rm min}}$ on $p$-local spaces}, Math. Z. \textbf{253} (2006), 435-451.

\bibitem{Serre} J. P. Serre, {\em Cohomologie modulo $2$ des complexes d'Eilenberg-MacLane}, Comm. Math. Helv. \textbf{27} (1953), 198-232.

\bibitem{Stanley} D. Stanley, {\em Exponents and suspension}, Math. Proc. Cambridge Philos. Soc. \textbf{133} (1) (2002), 109-116.

\bibitem{Theriault} S. D. Theriault, {\em Homotopy exponents of ${\rm mod}~2^r$ Moore spaces}, Topology \textbf{47} (2008), 369-398.

\bibitem{Umeda} Y. Umeda, {\em A remark on a theorem of J.-P. Serre}, Proc. Japan Acad. \textbf{35} (1959), 563-566.

\bibitem{White} G. Whitehead, {\em Elements of homotopy theory}, Springer-Verlag, GTM \textbf{62}, (1978).

\bibitem{Wu} J. Wu, {\em On combinatorial calculations for the James-Hopf maps}, Topology \textbf{37} (5) (1998), 1011-1023.

\bibitem{Wu2003} J. Wu, {\em Homotopy theory and the suspensions of the projective plane}, Memoirs AMS \textbf{162} (2003), No. 769.

\end{thebibliography}
\end{document}